\theoremstyle{plain}
\newtheorem{theorem}{Theorem}[section]
\newtheorem{lemma}[theorem]{Lemma}
\newtheorem{corollary}[theorem]{Corollary}
\newtheorem{algorithm}[theorem]{Algorithm}
\theoremstyle{definition}
\theoremstyle{remark}
\begin{document}

\title[EQUILIBRIUM PROBLEMS, VARIATIONAL INEQUALITIES AND FIXED POINTS]
{A PARALLEL HYBRID METHOD FOR EQUILIBRIUM PROBLEMS, VARIATIONAL INEQUALITIES AND  NONEXPANSIVE MAPPINGS IN HILBERT SPACE}

\author[D. V. Hieu]{Dang Van Hieu}
\address{Dang Van Hieu \\ Department of Mathematics \\ Hanoi University of Science, Hanoi, Vietnam}
\email{dv.hieu83@gmail.com}
%

\subjclass{65Y05, 47H09, 47H10, 47J20}
\keywords{Hybrid method, equilibrium problem, variational inequality, parallel computation.}

\begin{abstract}
In this paper, a novel parallel hybrid iterative method is proposed for finding a common element of the set of solutions of a system of equilibrium problems, the set of solutions of variational inequalities for inverse strongly monotone mappings and the set of fixed points of a finite family of nonexpansive mappings in Hilbert space. Strong convergence theorem is proved for the sequence generated by the scheme. Finally, a parallel iterative algorithm for two finite families of variational inequalities and nonexpansive mappings is established.
\end{abstract}

\maketitle
\section{Introduction}
Let $H$ be a real Hilbert space with the inner product $\left\langle .,. \right\rangle$ and the norm $\left\|.\right\|$. Let $C$ be a nonempty closed convex subset of $H$. Let $A:C\to H$ be a (nonlinear) operator. The variational inequality problem is to find $p^*\in C$ such that
\begin{equation}\label{eq:VIP}
\left\langle Ap^*,p-p^* \right\rangle \ge 0,\quad \forall p\in C.
\end{equation}
The set of solutions of (\ref{eq:VIP}) is denoted by $VI(A,C)$.\\
A mapping $S:C\to C$ is said to be nonexpansive if $\left\|Sx-Sy\right\|\le \left\|x-y\right\|$ for all $x,y\in C$. The set of fixed points of $S$ is denoted by $F(S)=\left\{x\in C:S(x)=x\right\}$. 

For finding a common element of the set of fixed points of a nonexpansive mapping and the set of solutions of the variational inequality for an $\alpha$ - inverse strongly monotone mapping in Hilbert space, Takahashi and Toyoda \cite{TT2003} proposed the following iterative method: $x_0\in C$ and 
\begin{equation*}
x_{n+1}=\alpha_n x_n+(1-\alpha_n)SP_C(x_n-\lambda_n Ax_n)
\end{equation*}
for $n=0,1,2,\ldots$,  where $\lambda_n \in [a,b]$ for some $a, b \in(0,2\alpha)$ and $\alpha_n \in [c,d]$ for some $c,d\in(0,1)$. They proved that the sequence $\left\{x_n\right\}$ converges weakly to $z\in F(S)\cap VI(A, C)$, where $z=\lim_{n\to \infty}P_{F(S)\cap VI(A,C)}x_n$. To obtain strong convergence, Iiduka and Takahashi \cite{IT2004} proved the following convergence theorem:
\begin{theorem}\cite{IT2004}
Let $C$ be a closed convex subset of a real Hilbert space $H$. Let $A$ be an $\alpha$ - inverse - strongly - monotone mapping of $C$ into $H$ and let $S$ be a nonexpansive nonself-mapping of $C$ into $H$ such that $F(S) \cap VI(A,C)\ne\O$. Suppose $x_1=x\in C$ and $\left\{x_n\right\}$ is given by
\begin{equation*}
x_{n+1}=P_C\left(\alpha_n x_n+(1-\alpha_n)SP_C(x_n-\lambda_n Ax_n)\right),
\end{equation*}
for every $n=1,2,\ldots,$ where $\left\{\alpha_n\right\}$ is a sequence in $[0,1)$ and $\left\{\lambda_n\right\}$ is a sequence in $[0,2\alpha]$. If $\left\{\alpha_n\right\}$ and $\left\{\lambda_n\right\}$ are chosen so that $\lambda_n\in[a,b]$ for some $a,b$ with $0<a<b<2\alpha$,
\begin{equation*}
\lim_{n\to\infty}\alpha_n =0,\quad\sum_{n=1}^\infty\alpha_n=\infty,\quad\sum_{n=1}^\infty\left|\alpha_{n+1}-\alpha_n\right|<\infty,\quad\sum_{n=1}^\infty\left|\lambda_{n+1}-\lambda_n\right|<\infty.
\end{equation*}
Then $\left\{x_n\right\}$ converges strongly to $P_{F(S)\cap VI(A,C)}x$.
\end{theorem}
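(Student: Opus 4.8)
The plan is to run the standard Halpern-type analysis; the decisive structural point, and what distinguishes this scheme from the weakly convergent Takahashi--Toyoda iteration, is that the damping term carries the \emph{fixed} anchor $x=x_1$ rather than the running iterate. Throughout I write $y_n=P_C(x_n-\lambda_n Ax_n)$ and $z_n=\alpha_n x+(1-\alpha_n)Sy_n$, so that $x_{n+1}=P_Cz_n$, and I set $u=P_{F(S)\cap VI(A,C)}x$ (the set $F(S)\cap VI(A,C)$ is closed and convex, so $u$ is well defined). Two elementary facts will be used repeatedly: (i) since $A$ is $\alpha$-inverse-strongly-monotone and $0<\lambda_n\le 2\alpha$, the map $I-\lambda_n A$ is nonexpansive, with the sharper estimate $\|(I-\lambda_n A)s-(I-\lambda_n A)t\|^2\le\|s-t\|^2+\lambda_n(\lambda_n-2\alpha)\|As-At\|^2$, and $p\in VI(A,C)$ iff $p=P_C(p-\lambda_n Ap)$; and (ii) for $u\in C$ the metric projection obeys $\|P_Cz-u\|^2\le\|z-u\|^2-\|z-P_Cz\|^2$. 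Fact (i) gives $\|y_n-u\|\le\|x_n-u\|$, whence a one-line induction yields $\|x_{n+1}-u\|\le\alpha_n\|x-u\|+(1-\alpha_n)\|x_n-u\|\le\|x-u\|$; thus $\{x_n\}$, and hence $\{y_n\},\{Ax_n\},\{Sy_n\}$, is bounded.

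Next I would prove asymptotic regularity $\|x_{n+1}-x_n\|\to0$. From $\|x_{n+1}-x_n\|\le\|z_n-z_{n-1}\|$, the expansion $z_n-z_{n-1}=(\alpha_n-\alpha_{n-1})(x-Sy_{n-1})+(1-\alpha_n)(Sy_n-Sy_{n-1})$, and $\|y_n-y_{n-1}\|\le\|x_n-x_{n-1}\|+|\lambda_n-\lambda_{n-1}|\,\|Ax_{n-1}\|$, one obtains
\begin{equation*}
\|x_{n+1}-x_n\|\le(1-\alpha_n)\|x_n-x_{n-1}\|+M\bigl(|\alpha_n-\alpha_{n-1}|+|\lambda_n-\lambda_{n-1}|\bigr)
\end{equation*}
with $M$ a constant furnished by boundedness. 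Since $\sum\alpha_n=\infty$ while $\sum|\alpha_{n+1}-\alpha_n|<\infty$ and $\sum|\lambda_{n+1}-\lambda_n|<\infty$, the recursion lemma (if $s_{n+1}\le(1-\gamma_n)s_n+\delta_n$ with $\gamma_n\in(0,1)$, $\sum\gamma_n=\infty$ and $\delta_n\ge0$, $\sum\delta_n<\infty$, then $s_n\to0$) gives $\|x_{n+1}-x_n\|\to0$; this is exactly where the two summability hypotheses are consumed.

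I would then extract the key residuals. Writing $\Gamma_n=\|x_n-u\|^2$, asymptotic regularity gives $|\Gamma_{n+1}-\Gamma_n|\to0$. Feeding the sharp estimate of (i) into $\Gamma_{n+1}\le\alpha_n\|x-u\|^2+\|x_n-u\|^2+(1-\alpha_n)\lambda_n(\lambda_n-2\alpha)\|Ax_n-Au\|^2$ and using $\lambda_n\in[a,b]\subset(0,2\alpha)$ yields $\|Ax_n-Au\|\to0$; the firm nonexpansiveness of $P_C$ gives $\|y_n-u\|^2\le\|x_n-u\|^2-\|(x_n-y_n)-\lambda_n(Ax_n-Au)\|^2$, which with $\|Ax_n-Au\|\to0$ and $\Gamma_n-\Gamma_{n+1}\to0$ forces $\|x_n-y_n\|\to0$; and fact (ii) applied to $x_{n+1}=P_Cz_n$ gives $\|z_n-x_{n+1}\|^2\le\alpha_n\|x-u\|^2+\Gamma_n-\Gamma_{n+1}\to0$, i.e. $d(z_n,C)\to0$. \textbf{The main obstacle is here}: because $S$ is a nonself map, $Sy_n$ may leave $C$ and the outer projection appears to erase all information about $Sy_n-y_n$. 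This is resolved precisely by (ii): from $\|z_n-x_{n+1}\|\to0$ and $\|z_n-Sy_n\|=\alpha_n\|x-Sy_n\|\to0$ one gets $\|x_{n+1}-Sy_n\|\to0$, and combined with $\|x_{n+1}-y_n\|\le\|x_{n+1}-x_n\|+\|x_n-y_n\|\to0$ this finally delivers $\|Sy_n-y_n\|\to0$.

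Finally I would identify the weak cluster points and close the argument. If $y_{n_k}\rightharpoonup w$, then $\|Sy_{n_k}-y_{n_k}\|\to0$ and demiclosedness of $I-S$ at $0$ give $w\in F(S)$; using the projection inequality defining $y_n$ together with the maximal monotone operator $T=A+N_C$ ($N_C$ the normal cone of $C$, whose zeros are exactly $VI(A,C)$), the bounds $\|x_n-y_n\|\to0$, $\|Ax_n-Ay_n\|\le\frac1\alpha\|x_n-y_n\|\to0$ and $\lambda_n\ge a$ give $\langle v-w,\psi\rangle\ge0$ for every $(v,\psi)$ in the graph of $T$, hence $w\in VI(A,C)$. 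Thus every weak cluster point lies in $F(S)\cap VI(A,C)$, so the variational characterization $\langle x-u,v-u\rangle\le0$ of $u=P_{F(S)\cap VI(A,C)}x$ yields $\limsup_n\langle x-u,x_{n+1}-u\rangle\le0$, and since $\|z_n-x_{n+1}\|\to0$ also $\limsup_n\langle x-u,z_n-u\rangle\le0$. Combining $\|x_{n+1}-u\|^2\le(1-\alpha_n)\|x_n-u\|^2+2\alpha_n\langle x-u,z_n-u\rangle$ (from $\|x_{n+1}-u\|^2\le\|z_n-u\|^2$ and the nonexpansive estimate) with the recursion lemma in its second form (if $s_{n+1}\le(1-\gamma_n)s_n+\gamma_n\eta_n$ with $\sum\gamma_n=\infty$ and $\limsup_n\eta_n\le0$, then $s_n\to0$) gives $\|x_n-u\|\to0$, i.e. strong convergence to $P_{F(S)\cap VI(A,C)}x$.
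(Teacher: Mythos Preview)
The paper does not prove this statement at all: it is quoted from Iiduka--Takahashi \cite{IT2004} in the Introduction purely as background, with no argument attached. So there is no ``paper's own proof'' to compare your proposal against. What you have written is, in outline, precisely the proof given in the original source: boundedness via nonexpansiveness of $I-\lambda_nA$ and $P_C$; asymptotic regularity $\|x_{n+1}-x_n\|\to0$ from the recursion lemma using $\sum\alpha_n=\infty$ and the two summability hypotheses; extraction of $\|Ax_n-Au\|\to0$ and $\|x_n-y_n\|\to0$ from the sharp inverse-strong-monotonicity estimate and firm nonexpansiveness of $P_C$; the projection bound $\|P_Cz-u\|^2\le\|z-u\|^2-\|z-P_Cz\|^2$ to recover $\|Sy_n-y_n\|\to0$ despite $S$ being nonself; identification of weak cluster points via demiclosedness of $I-S$ and maximal monotonicity of $A+N_C$; and a final pass with the recursion lemma. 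The argument is correct.

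One point worth flagging: the iteration as printed in the paper has $\alpha_n x_n$ (the running iterate), whereas you treat it as $\alpha_n x$ with the fixed anchor $x=x_1$. Your reading is the right one --- the hypotheses $\alpha_n\to0$, $\sum\alpha_n=\infty$, $\sum|\alpha_{n+1}-\alpha_n|<\infty$ are the classical Halpern conditions and make no sense for a Mann-type scheme, and the conclusion names $P_{F(S)\cap VI(A,C)}x$ with the anchor $x$. The discrepancy is a transcription error in the present paper's quotation of \cite{IT2004}; your proof addresses the theorem as it actually stands in the cited source.
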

Let $f$ be a bifunction from $C\times C$ to the set of real numbers $\mathbb{R}$. The equilibrium problem for $f$ is to find an
element  $\widehat{x}\in C$, such that
\begin{equation}\label{eq:EP}
f(\widehat{x},y)\ge 0,\, \forall y\in C.
\end{equation} 
The set of solutions of the equilibrium problem $(\ref{eq:EP})$ is denoted by $EP(f)$. Equilibrium problems are generalized by several problems such as: optimization problems, variational inequalities, etc. In recent years, several methods have been proposed for finding a solution of equilibrium problem $(\ref{eq:EP})$ in Hilbert space \cite{BO1994,CH2005,TT2007,ZLL2011,YYL2012}.

In 2010, for finding a common element of the set of fixed points of nonexpansive mappings, the set of the solutions of  variational inequalities for $\alpha$-inverse strongly monotone operators, and the set of the solutions of equilibrium problems in Hilbert space, Saeidi \cite{S2010} proposed the following iterative method: $x_0\in H$ and
\begin{equation*}
\begin{cases}
&u_n=T_{r_{M,n}}^{f_M}\ldots T_{r_{1,n}}^{f_1}x_n,\\ 
&v_n=P_C(I-\lambda_{N,n}A_N)\ldots P_C(I-\lambda_{1,n}A_1)u_n,\\
&y_n=(1-\alpha_n)x_n+\alpha_n W_nv_n,\\
&C_n=\left\{v\in H:\left\|v-y_n\right\| \le \left\|v-x_n\right\|\right\},\\
&Q_n=\left\{v\in H: \left\langle x_0-x_n,x_n-v\right\rangle \ge 0\right\},\\
&x_{n+1}=P_{C_n\cap Q_n}x_0,n\ge 1,
\end{cases}
\end{equation*}
where, $W_n$ is the nonexpansive mapping, so-called the $W$-mapping \cite{T1997}, and $T_r^f x := u $ is the unique solution to the regularized equlibrium problem
$$f(u,y)+\frac{1}{r}\langle y-u,u-x\rangle\geq0, \quad \forall y\in C.$$
Clearly, Saeidi's algorithm is inherently sequential. Hence, when the numbers of operators $N$ and bifunctions $M$ are large, it is costly on a single processor.

Very recently, Anh and Chung \cite{AC2013} have proposed the following parallel hybrid iterative method for finding an element of the set of fixed points of a finite family of relatively nonexpansive mappings $\left\{S_i\right\}_{i=1}^N$: 
\begin{equation*}
\begin{cases}
&x_0\in C_0:=C, Q_0:=C,\\
&y_n^i=\alpha_n x_n+(1-\alpha_n)S_i x_n, i=1,\ldots, N,\\
&i_n:=\arg\max\left\{\left\|y_n^i -x_n \right\|:i=1,\ldots,N\right\}, \bar{y}_n:=y_n^{i_n},\\
&C_n=\left\{v\in C:\left\|v-\bar{y}_n\right\| \le \left\|v-x_n\right\|\right\},\\
&Q_n=\left\{v\in C : \left\langle x_0-x_n,x_n-v\right\rangle \ge 0\right\},\\
&x_{n+1}=P_{C_n\cap Q_n}x_0,n\ge 0.
\end{cases}
\end{equation*}
This algorithm was extended by Anh and Hieu \cite{AH2014} for a finite family of asymptotically quasi $\phi$ - nonexpansive mappings in Banach spaces.

In this paper, motivated by the results of  Takahashi et al \cite{IT2004,TT2003}, Saeidi \cite{S2010}, Anh and Chung \cite{AC2013}, we propose the following novel parallel hybrid iterative method for finding a common element of the set of solutions of a system of equilibrium problems for bifunctions $\left\{f_l\right\}_{l=1}^K$, the set of solutions of variational inequalies for $\alpha$-inverse strongly monotone mappings $\left\{A_k\right\}_{k=1}^M$ and the set of fixed points of a finite family of nonexpansive mappings $\left\{S_i\right\}_{i=1}^N$:
\begin{equation}\label{eq:Algorithm1}
\begin{cases}
&x_0\in H, C_0=Q_0=C,\\ 
&z_n^l=T_{r_n}^{f_l}x_n, l=1,\ldots, K,\\
&l_n:=\arg\max\left\{\left\|z_n^l -x_n\right\|:l=1,\ldots,K\right\}, \bar{z}_n:=z_n^{l_n},\\
&u_n^k=P_C (\bar{z}_n-\lambda A_k\bar{z}_n),k=1,\ldots, M,\\
& k_n:=\arg\max\left\{\left\|u_n^k -x_n \right\|:k=1,\ldots,M\right\}, \bar{u}_n:=u_n^{k_n},\\
&y_n^i=\alpha_n \bar{u}_n+(1-\alpha_n)S_i \bar{u}_n, i=1,\ldots, N,\\
& i_n:=\arg\max\left\{\left\|y_n^i -x_n \right\|:i=1,\ldots,N\right\}, \bar{y}_n:=y_n^{i_n},\\
 &C_n=\left\{v\in H:\left\|v-\bar{y}_n\right\| \le\left\|v-\bar{z}_n\right\|\le \left\|v-x_n\right\|\right\},\\
&Q_n=\left\{v\in H: \left\langle x_0-x_n,x_n-v\right\rangle \ge 0\right\},\\
&x_{n+1}=P_{C_n\cap Q_n}x_0,n\ge 0,
\end{cases}
\end{equation}
where $\lambda \in (0,2\alpha)$ and the control parameter sequences $\left\{\alpha_n\right\},\left\{r_n\right\}$ satisfy some conditions. Clearly, in the method $(\ref{eq:Algorithm1})$, at $n^{th}$ step, we can calculate the intermediate approximations $z_n^l$ in parallel. Then, among all $z_n^l$, the element $\bar{z}_n$ which is farest from $x_n$ is selected. Using the element $\bar{z}_n$ to find the approximations $u_n^k$ in parallel. After that, we chose the element $\bar{u}_n$ that is farest from $x_n$ among $u_n^k$. Similarly, $y_n^i$ are calculated in parallel and $\bar{y}_n$ is determined. Based on $\bar{y}_n$, $\bar{z}_n$, $x_n$, the closed and convex subsets $C_n, Q_n$ are constructed. Finally, the next approximation $x_{n+1}$ is determined as the projection of $x_0$ onto the intersection $C_n\cap Q_n$ of two closed and convex subsets $C_n$ and $Q_n$.

This paper is organized as follows: In Section 2, we collect some definitions and results for researching into the convergence of the proposed method. Section 3 deals with the convergence analysis of the method and its applications.
\section{Preliminaries}
In what follows, we review some definitions and results, which are employed in this paper. We refer the reader to \cite{IT2004}. We write $x_n\to x$ to indicate that the sequence $\left\{x_n\right\}$ converges strongly to $x$ and $x\rightharpoonup x$ implies that $\left\{x_n\right\}$ converges weakly to $x$. 

A mapping $A:C\to H$ is called \textit{$\alpha$ - inverse strongly monotone} if there exists a constant $\alpha>0$ such that
\begin{equation*}\label{eq:2.30}
\left\langle Ax-Ay,x-y \right\rangle \ge \alpha\left\|Ax-Ay\right\|^2
\end{equation*}
for all $x,y\in C$ and \textit{$\eta$ - strongly monotone} if there exists $\eta>0$ such that 
\begin{equation*}\label{eq:2.30*}
\left\langle Ax-Ay,x-y \right\rangle \ge \eta\left\|x-y\right\|^2.
\end{equation*}
It is well known that if $A$ is $\eta$ - strongly monotone and $L$ - Lipschitz, i.e., $\left\|Ax-Ay\right\|\le L\left\|x-y\right\|$ for all $x,y\in C$ then $A$ is $\eta/L^2$ -  inverse strongly monotone. If $A:C\to H$ is $\alpha$ - inverse strongly monotone then $A$ is $1/\alpha$ - Lipschitz continuous and $I-\lambda A$ is nonexpansive of $C$ onto $H$, where $\lambda\in (0,2\alpha)$. If $T$ is nonexpansive then $A=I-T$ is $1/2$ - inverse strongly monotone and $VI(A,C)=F(T)$. 

For every $x\in H$, the element $P_C x$ is defined by
\begin{equation*}\label{eq:1.3}
P_C x=\arg\min\left\{\left\|y-x\right\|:y\in C\right\}.
\end{equation*}
Since C is a nonempty closed and convex subset of $H$, $P_C x$ is existent and unique. Mapping $P_C:H\to C$ is called the projection of $H$ onto $C$. It is also known that $P_C$ satisfies
\begin{equation}\label{eq:FirmlyNonexpOfPC}
\left\langle P_C x-P_C y,x-y \right\rangle \ge \left\|P_C x-P_C y\right\|^2.
\end{equation}
This implies that $P_C$ is $1$ - inverse strongly monotone and for all $x\in C, y\in H$, we have
\begin{equation}\label{eq:ProperOfPC}
\left\|x-P_C y\right\|^2+\left\|P_C y-y\right\|^2\le \left\|x-y\right\|^2.
\end{equation}
Moreover, $z=P_C x$ if only if 
\begin{equation}\label{eq:EquivalentPC}
\left\langle x-z,z-y \right\rangle \ge 0,\quad \forall y\in C,
\end{equation}
and this implies that $p^*\in VI(A,C)$ if only if 
\begin{equation}\label{eq:EquivalentVIP}
p^*=P_C (p^*-\lambda Ap^*), \quad \lambda >0.
\end{equation} 
We have the following result of the convexity and closedness of $VI(A,C)$.
\begin{lemma}\label{lem:T2000}\cite{T2000}
 Let C be a nonempty, closed convex subset of a Banach space E and A be a monotone, hemicontinuous operator of C into $E^*$. Then
\begin{equation*}
VI(A,C)=\left\{u\in C:\left\langle v-u,Av\right\rangle\ge 0, \quad for\,all\,\,\, v\in C\right\}.
\end{equation*}
\end{lemma}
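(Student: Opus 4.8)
The statement is the classical Minty lemma, identifying the Stampacchia formulation defining $VI(A,C)$ with its Minty (dual) counterpart. The plan is to prove it by establishing the two inclusions separately: the first uses only monotonicity of $A$, while the second uses hemicontinuity together with the convexity of $C$.

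For the inclusion ``$\subseteq$'', I would take $u\in VI(A,C)$, so that $\langle Au,v-u\rangle\ge 0$ for every $v\in C$. Monotonicity of $A$ gives $\langle Av-Au,v-u\rangle\ge 0$, and adding the two inequalities yields $\langle Av,v-u\rangle\ge 0$, that is $\langle v-u,Av\rangle\ge 0$ for all $v\in C$. Hence $u$ lies in the right-hand set. Note that this direction needs only monotonicity and never invokes hemicontinuity.

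For the reverse inclusion ``$\supseteq$'', fix $u$ satisfying $\langle v-u,Av\rangle\ge 0$ for all $v\in C$, and fix an arbitrary $v\in C$. Using convexity, set $v_t=(1-t)u+tv\in C$ for $t\in(0,1]$ and apply the hypothesis at the point $v_t$. Since $v_t-u=t(v-u)$, this reads $t\langle v-u,Av_t\rangle\ge 0$, hence $\langle v-u,Av_t\rangle\ge 0$ for every $t\in(0,1]$. Letting $t\to 0^+$ makes $v_t$ tend to $u$ along the segment $[u,v]$, and hemicontinuity of $A$ allows me to pass to the limit inside the pairing, giving $\langle v-u,Au\rangle\ge 0$, i.e. $\langle Au,v-u\rangle\ge 0$. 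As $v\in C$ was arbitrary, $u\in VI(A,C)$.

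The one delicate point I expect is this last limit passage: monotonicity alone does not suffice for the inclusion ``$\supseteq$'', and it is precisely the hemicontinuity of $A$ (continuity of $t\mapsto Av_t$ along the segment, in the weak$^*$ sense) that justifies $\langle v-u,Av_t\rangle\to\langle v-u,Au\rangle$ as $t\to 0^+$. Everything else reduces to a one-line manipulation of the defining inequalities.
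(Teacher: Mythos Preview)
Your proof is correct and is precisely the classical Minty argument. Note, however, that the paper does not supply its own proof of this lemma: it is quoted from \cite{T2000} and used as a black box, so there is no in-paper proof to compare against. Your write-up would serve perfectly well as a self-contained justification of the cited result.
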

Next, for solving the equilibrium problem $(\ref{eq:EP})$, we assume that the bifunction $f$ satisfies the following conditions:
\begin{enumerate}
\item[(A1)] $f(x,x)=0$ for all $x\in C$;
\item[(A2)] f is monotone, i.e., $f(x,y)+f(y,x)\le 0$ for all $x, y \in C$;
\item[(A3)] For all $x,y,z \in C$,
$$ \lim_{t\to 0^+}\sup f(tz+(1-t)x,y) \le f(x,y); $$
\item[(A4)] For all $x\in C$, $f(x,.)$ is convex and lower semicontinuous.
\end{enumerate}
The following results concern with the bifunction $f$ :
\begin{lemma}\label{ExitenceN0} \cite{CH2005} Let $C$ be a
closed and convex subset of Hilbert space H, $f$ be a bifunction from $C\times C$ to
$\mathbb{R}$ satisfying the conditions $(A1)$-$(A4)$ and let $r>0$,
$x\in H$. Then, there exists $z\in C$ such that
\begin{eqnarray*}
f(z,y)+\frac{1}{r}\langle y-z,z-x\rangle\geq0, \quad \forall y\in C.
\end{eqnarray*}
\end{lemma}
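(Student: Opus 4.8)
The plan is to recast the assertion as a nonempty-intersection statement amenable to the Fan--KKM lemma. Writing $g(z,y):=f(z,y)+\frac{1}{r}\langle y-z,z-x\rangle$ and
$$G(y):=\left\{z\in C: g(z,y)\ge 0\right\},$$
the conclusion is precisely that $\bigcap_{y\in C}G(y)\neq\emptyset$. First I would verify that $G$ is a KKM map, i.e.\ that $\mathrm{conv}\{y_1,\dots,y_m\}\subseteq\bigcup_{i=1}^m G(y_i)$. If some $z=\sum_i\lambda_i y_i$ escaped every $G(y_i)$, then $g(z,y_i)<0$ for all $i$; multiplying by $\lambda_i$, summing, and using convexity of $f(z,\cdot)$ from (A4) together with $f(z,z)=0$ from (A1) and the identity $\sum_i\lambda_i\langle y_i-z,z-x\rangle=\langle z-z,z-x\rangle=0$ yields $0=g(z,z)\le\sum_i\lambda_i g(z,y_i)<0$, a contradiction.

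The main obstacle is that $f$ is continuous only in its second argument, so the sets $G(y)$ need not be closed and Fan's lemma cannot be applied to $G$ directly. To circumvent this I would introduce the auxiliary map
$$H(y):=\left\{z\in C: -f(y,z)+\frac{1}{r}\langle y-z,z-x\rangle\ge 0\right\}.$$
Monotonicity (A2) gives $f(z,y)\le -f(y,z)$, hence $G(y)\subseteq H(y)$, so $H$ is again a KKM map; moreover each $H(y)$ is closed and convex, since $z\mapsto -f(y,z)$ is upper semicontinuous and concave by (A4) while the quadratic term is continuous and concave. The delicate point is to prove $\bigcap_y H(y)=\bigcap_y G(y)$; the inclusion $\subseteq$ is the crux and I would establish it by the standard segment trick: given $z\in\bigcap_y H(y)$ and $y\in C$, test $H$ at $y_t:=ty+(1-t)z$, use convexity of $f(y_t,\cdot)$ and $f(y_t,y_t)=0$ to cancel, divide by $t>0$, and finally let $t\to0^+$, invoking (A3) to obtain $g(z,y)\ge 0$.

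It remains to supply a compactness hypothesis for the KKM lemma, which in the infinite-dimensional setting I would obtain through coercivity. Fixing one $y_0\in C$, the convex lower semicontinuous function $f(y_0,\cdot)$ admits an affine minorant, whereas the $-\tfrac1r\|z\|^2$ contribution inside $\frac1r\langle y_0-z,z-x\rangle$ dominates it as $\|z\|\to\infty$; hence the defining inequality of $H(y_0)$ forces $\|z\|$ to stay bounded, so $H(y_0)$ is closed, convex and bounded, therefore weakly compact, while each $H(y)$ is weakly closed by Mazur's theorem. Applying Fan's KKM lemma in the weak topology then gives $\bigcap_y H(y)\neq\emptyset$, and the equivalence of the intersections delivers the desired $z\in C$. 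I expect the interplay between the discontinuity of $f(\cdot,y)$, handled by passing from $G$ to $H$, and the need for weak compactness, handled by coercivity, to be the part requiring the most care.
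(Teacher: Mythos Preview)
The paper does not supply its own proof of this lemma; it is quoted verbatim from \cite{CH2005} (and, ultimately, Blum--Oettli \cite{BO1994}) and used as a black box. So there is no in-paper argument to compare against.

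Your KKM-based outline is precisely the classical proof behind those references and is correct in every essential respect. The decomposition into the ``bad'' map $G$ (KKM but not closed-valued) and the ``good'' map $H$ (closed convex valued, hence weakly closed, and KKM because $G\subseteq H$) is the standard manoeuvre; your segment trick with $y_t=ty+(1-t)z$, combining $0=f(y_t,y_t)\le tf(y_t,y)+(1-t)f(y_t,z)$ with the inequality coming from $z\in H(y_t)$, dividing by $t$ and invoking (A3), does recover $\bigcap_y H(y)\subseteq\bigcap_y G(y)$. The coercivity step is also sound: extending $f(y_0,\cdot)$ by $+\infty$ off $C$ gives a proper convex lower semicontinuous function on $H$, which indeed admits an affine minorant, and the $-\tfrac1r\|z\|^2$ term then forces boundedness of $H(y_0)$, yielding weak compactness. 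Fan's lemma in the weak topology finishes the argument. Nothing is missing; this is exactly the proof one would read in the cited sources.
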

\begin{lemma}\label{Tr-ClosedConvex}\cite{CH2005} Let
$C$ be a closed and convex subset of a Hilbert space $H$, $f$ be a bifunction from
$C\times C$ to $\mathbb{R}$ satisfying the conditions $(A1)$-$(A4)$.
For all $r>0$ and $x\in H$, define the mapping
\begin{eqnarray*}
T_r^f x=\{z\in C:f(z,y)+\frac{1}{r}\langle y-z,z-x\rangle\geq0, \quad
\forall y\in C\}.
\end{eqnarray*}
Then the following hold:

{\rm (B1)} $T_r^f$ is single-valued;

{\rm (B2)} $T_r^f$ is a firmly nonexpansive, i.e., for
all $x, y\in H,$
\begin{eqnarray*}
||T_r^fx-T_r^fy||^2\leq\langle T_r^fx-T_r^fy,x-y\rangle;
\end{eqnarray*}

{\rm (B3)} $F(T_r^f)=EP(f);$

{\rm (B4)} $EP(f)$ is closed and convex.
\end{lemma}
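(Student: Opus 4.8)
The plan is to establish the four properties in turn, in each case exploiting the variational inequality that \emph{defines} $T_r^f x$ together with the monotonicity hypothesis (A2). The recurring device is this: given two points $z_1\in T_r^f x$ and $z_2\in T_r^f y$ (allowing $x=y$), I would write down the defining inequality for each, then substitute the test point $y'=z_2$ into the inequality for $z_1$ and $y'=z_1$ into the inequality for $z_2$, and add. The cross-substitution is engineered so that the two bifunction terms combine into $f(z_1,z_2)+f(z_2,z_1)$, which is $\le 0$ by (A2), while the inner-product terms collapse to a multiple of $\|z_1-z_2\|^2$.

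For (B1), I would fix $x$ and suppose $z_1,z_2\in T_r^f x$. After the cross-substitution the $x$-terms cancel and the bracket reduces to $\langle z_2-z_1,z_1-z_2\rangle=-\|z_1-z_2\|^2$, yielding $\tfrac{1}{r}\|z_1-z_2\|^2\le f(z_1,z_2)+f(z_2,z_1)\le 0$ and hence $z_1=z_2$. Combined with the existence guaranteed by Lemma \ref{ExitenceN0}, this shows $T_r^f$ is a well-defined single-valued map on $H$.

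For (B2), I would take $z_1=T_r^f x$ and $z_2=T_r^f y$ and run the identical substitution; now the $x$- and $y$-terms survive, and the surviving combination reorganizes into $\langle z_1-z_2,\,x-y\rangle\ge\|z_1-z_2\|^2$, which is precisely the firm nonexpansiveness asserted in (B2). Property (B3) I expect to be essentially immediate from the definition: if $x=T_r^f x$ then the term $\tfrac{1}{r}\langle y-x,\,x-x\rangle$ vanishes, leaving $f(x,y)\ge 0$ for all $y\in C$, so $x\in EP(f)$; conversely, any $x\in EP(f)$ satisfies the defining inequality of $T_r^f x$ with the inner-product term absent, whence $T_r^f x=x$ by single-valuedness. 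Thus $F(T_r^f)=EP(f)$.

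The final property (B4) is the only step I expect to require genuine care. Having identified $EP(f)=F(T_r^f)$ through (B3), I would reduce the problem to the general fact that the fixed-point set of a nonexpansive self-map of a closed convex set in a Hilbert space is closed and convex; nonexpansiveness is a consequence of (B2). Closedness is routine from the continuity of $T_r^f$. For convexity, given $u,v\in F(T_r^f)$ and $w=\lambda u+(1-\lambda)v$, the triangle-inequality sandwich $\|u-v\|\le\|u-T_r^f w\|+\|T_r^f w-v\|\le\|u-w\|+\|w-v\|=\|u-v\|$ forces equality throughout, and the delicate point is to extract from the equality case, using the strict convexity of the Hilbert norm, that $T_r^f w$ lies on the segment $[u,v]$ at the correct parameter, so that $T_r^f w=w$. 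This colinearity argument is where I anticipate the main obstacle.
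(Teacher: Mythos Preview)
The paper does not supply its own proof of this lemma: it is quoted verbatim from Combettes--Hirstoaga \cite{CH2005} as a preliminary result, so there is nothing in the paper to compare against. Your outline is correct and is in fact the standard argument from that reference; the cross-substitution device you describe for (B1) and (B2) is exactly how it goes, and (B3) is immediate as you say.

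One remark on (B4), which you flag as the delicate step. You propose to deduce convexity of $F(T_r^f)$ from the triangle-inequality sandwich and then invoke strict convexity of the Hilbert norm to extract colinearity. This works, but in a Hilbert space there is a cleaner route that sidesteps the equality-case analysis entirely. With $u,v\in F(T_r^f)$, $w=\lambda u+(1-\lambda)v$, and $T:=T_r^f$, the identity
\[
\|Tw-w\|^2=\lambda\|Tw-u\|^2+(1-\lambda)\|Tw-v\|^2-\lambda(1-\lambda)\|u-v\|^2
\]
combined with $\|Tw-u\|=\|Tw-Tu\|\le\|w-u\|$ and $\|Tw-v\|\le\|w-v\|$ gives
\[
\|Tw-w\|^2\le\lambda\|w-u\|^2+(1-\lambda)\|w-v\|^2-\lambda(1-\lambda)\|u-v\|^2=0,
\]
so $Tw=w$ directly. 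This avoids what you anticipated as the main obstacle.
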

\begin{lemma}\cite{GK1990}\label{lem.demiclose}
Assume that $T:C\to C$ is a nonexpansive mapping. If $T$ has a fixed point , then 
\begin{enumerate}
\item [$(i)$] $F(T)$ is closed convex subset of $H$.
\item [$(ii)$] $I-T$ is demiclosed, i.e., whenever $\left\{x_n\right\}$ is a sequence in $C$ weakly converging to some $x\in C$ and the sequence $\left\{(I-T)x_n\right\}$ strongly converges to some $y$ , it follows that $(I-T)x=y$.
\end{enumerate}
\end{lemma}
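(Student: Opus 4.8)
The plan is to treat the two assertions separately, relying only on the nonexpansiveness of $T$ together with the Hilbert space structure (the inner product and the associated parallelogram-type identity); no deeper machinery is needed.

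For part $(i)$, closedness is immediate from continuity: a nonexpansive map is $1$-Lipschitz, so if $x_n\in F(T)$ and $x_n\to x$, then $Tx=\lim_n Tx_n=\lim_n x_n=x$. For convexity I would fix $p,q\in F(T)$, set $z=tp+(1-t)q$ with $t\in[0,1]$, and exploit the elementary identity, valid in any Hilbert space,
\[
t\|u-p\|^2+(1-t)\|u-q\|^2=\|u-z\|^2+t(1-t)\|p-q\|^2,
\]
applied with $u=Tz$. Since $p,q$ are fixed points, nonexpansiveness gives $\|Tz-p\|\le\|z-p\|$ and $\|Tz-q\|\le\|z-q\|$, so the left-hand side is at most $t\|z-p\|^2+(1-t)\|z-q\|^2$, which the same identity (now with $u=z$) evaluates to exactly $t(1-t)\|p-q\|^2$. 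Comparing this with the identity for $u=Tz$ forces $\|Tz-z\|^2\le 0$, hence $Tz=z$, so $z\in F(T)$ and $F(T)$ is convex.

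For part $(ii)$, suppose $x_n\rightharpoonup x$ and $(I-T)x_n\to y$; I must show $(I-T)x=y$, equivalently $Tx=w$ where $w:=x-y$. Writing $a_n:=(x_n-Tx_n)-y\to 0$ and observing $Tx_n\rightharpoonup w$ (the weak limit $x$ minus the strong limit $y$), I would first expand $x_n-x=a_n+(Tx_n-w)$ to get $\|x_n-x\|^2=\|Tx_n-w\|^2+o(1)$, since $a_n\to 0$ strongly while $Tx_n-w\rightharpoonup 0$ stays bounded. Then I apply nonexpansiveness as $\|Tx_n-Tx\|^2\le\|x_n-x\|^2$ and expand the left side about $w$:
\[
\|Tx_n-Tx\|^2=\|Tx_n-w\|^2+2\langle Tx_n-w,\,w-Tx\rangle+\|w-Tx\|^2.
\]
Substituting the previous estimate and cancelling the common term $\|Tx_n-w\|^2$ leaves $2\langle Tx_n-w,\,w-Tx\rangle+\|w-Tx\|^2\le o(1)$; letting $n\to\infty$ and using $Tx_n-w\rightharpoonup 0$ annihilates the inner product, yielding $\|w-Tx\|^2\le 0$, hence $Tx=w$, as required.

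The algebraic identity and the continuity argument are routine; the genuine content lies in part $(ii)$, and the main obstacle there is converting weak convergence of $\{x_n\}$ into information about the \emph{nonlinear} image $Tx$. This is exactly where the Hilbert space geometry is indispensable: both the estimate $\|x_n-x\|^2=\|Tx_n-w\|^2+o(1)$ and the vanishing of $\langle Tx_n-w,\,w-Tx\rangle$ rest on the inner-product cross-term disappearing under weak convergence, which serves as the analytic substitute for Opial's condition. One could instead phrase the same argument through Opial's inequality, but I would prefer the direct inner-product computation, as it is self-contained and does not require invoking Opial as a separate lemma.
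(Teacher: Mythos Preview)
Your argument is correct in both parts. Part $(i)$ is handled cleanly via the standard Hilbert space identity, and in part $(ii)$ the algebra checks out: the decomposition $x_n-x=a_n+(Tx_n-w)$ is valid, the estimate $\|x_n-x\|^2=\|Tx_n-w\|^2+o(1)$ follows since $a_n\to 0$ strongly while $Tx_n-w$ is bounded, and the final passage to the limit using $Tx_n-w\rightharpoonup 0$ is legitimate.

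Regarding comparison with the paper: there is nothing to compare, because the paper does not prove this lemma. It is stated as a known result, attributed to Goebel and Kirk, and invoked without argument. So your proof is not an alternative to the paper's proof but rather a self-contained justification of a cited fact. The approach you give is essentially the classical one; the standard textbook presentation often routes part $(ii)$ through Opial's property (every Hilbert space satisfies it), but as you note, the direct inner-product computation you use is equivalent and avoids naming Opial as a separate lemma.
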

\section{Main results}
In this section, we shall prove the convergence theorem for the method $(\ref{eq:Algorithm1})$. Putting
$$F=\left(\cap_{l=1}^K EP(f_l)\right)\bigcap\left(\cap_{i=1}^N F(S_i)\right)\bigcap \left(\cap_{k=1}^M VI(A_k,C)\right)$$
and assume that $F$ is the nonempty set. We also propose a simplier algorithm than the algorithm $(\ref{eq:Algorithm1})$ for a system of variational inequalities and a finite family of nonexpansive mappings.
\begin{theorem}\label{theo:VIPandRNMs1}
Let $\left\{A_k\right\}_{k=1}^M:C\to H$ be a finite family of $\alpha$ - inverse strongly monotone operators, $\left\{S_i\right\}_{i=1}^N:C\to C$ be a finite family of nonexpansive mappings, and $\left\{f_l\right\}_{l=1}^K$ be a finite family of befunctions from $C\times C$ to $\mathbb{R}$ satisfing the conditions $(A1)-(A4)$. Assume that the set $F$ is nonempty, $\lambda\in(0;2\alpha)$ and the control parameter sequences $\left\{\alpha_n\right\}$ and $\left\{r_n\right\}$ satisfy the following conditions:
\begin{itemize}
\item [$(i)$] $\left\{\alpha_n\right\}\subset [0,1], \lim\sup_{n\to \infty}\alpha_n <1$;
\item [$(ii)$] $\left\{r_n\right\}\subset [d,\infty)$ for some $d>0$.
\end{itemize}
Then the sequence $\left\{x_n\right\}$ is generated by algorithm $(\ref{eq:Algorithm1})$ converges strongly to $P_{F}x_0$.
\end{theorem}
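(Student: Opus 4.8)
The plan is to follow the standard template for hybrid (CQ) projection methods, adapted to the three-layer parallel structure of $(\ref{eq:Algorithm1})$. First I would check that the iteration is well defined: both $C_n$ and $Q_n$ are intersections of half-spaces, hence closed and convex, so $x_{n+1}=P_{C_n\cap Q_n}x_0$ makes sense once $C_n\cap Q_n\neq\emptyset$. The crucial inclusion is $F\subset C_n\cap Q_n$ for every $n$. Fix $p\in F$. Since $p\in EP(f_l)=F(T_{r_n}^{f_l})$ and $T_{r_n}^{f_l}$ is nonexpansive (Lemma \ref{Tr-ClosedConvex}), $\|z_n^l-p\|\le\|x_n-p\|$, so $\|\bar z_n-p\|\le\|x_n-p\|$; since $p\in VI(A_k,C)=F(P_C(I-\lambda A_k))$ and $P_C(I-\lambda A_k)$ is nonexpansive, $\|u_n^k-p\|\le\|\bar z_n-p\|$, so $\|\bar u_n-p\|\le\|\bar z_n-p\|$; and since $p\in F(S_i)$ with $S_i$ nonexpansive, the convexity estimate gives $\|\bar y_n-p\|\le\|\bar u_n-p\|\le\|\bar z_n-p\|$. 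Chaining these yields $\|p-\bar y_n\|\le\|p-\bar z_n\|\le\|p-x_n\|$, i.e. $p\in C_n$; the inclusion $F\subset Q_n$ follows by induction from the projection characterization $(\ref{eq:EquivalentPC})$.

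Next I would extract boundedness and asymptotic regularity. The definition of $Q_n$ together with $(\ref{eq:EquivalentPC})$ shows $x_n=P_{Q_n}x_0$; combined with $x_{n+1}\in Q_n$ and $F\subset C_n\cap Q_n$, this makes $\{\|x_n-x_0\|\}$ nondecreasing and bounded above by $\|P_Fx_0-x_0\|$, hence convergent. From $x_{n+1}\in Q_n$ one gets $\|x_{n+1}-x_n\|^2\le\|x_{n+1}-x_0\|^2-\|x_n-x_0\|^2\to0$, so $\|x_{n+1}-x_n\|\to0$. Because $x_{n+1}\in C_n$ forces $\|x_{n+1}-\bar y_n\|\le\|x_{n+1}-\bar z_n\|\le\|x_{n+1}-x_n\|$, I conclude $\|\bar z_n-x_n\|\to0$ and $\|\bar y_n-x_n\|\to0$, and then by the farthest-element selection $\|z_n^l-x_n\|\to0$ for all $l$ and $\|y_n^i-x_n\|\to0$ for all $i$.

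The main obstacle is the variational-inequality layer, since $C_n$ does not see $\bar u_n$ directly. Here I would use the $\alpha$-inverse-strongly-monotone estimate $\|u_n^k-p\|^2\le\|\bar z_n-p\|^2-\lambda(2\alpha-\lambda)\|A_k\bar z_n-A_kp\|^2$ for the selected index $k_n$. The sandwich $\|\bar y_n-p\|\le\|\bar u_n-p\|\le\|\bar z_n-p\|\le\|x_n-p\|$ together with $\|\bar y_n-x_n\|\to0$ gives $\|\bar z_n-p\|-\|\bar u_n-p\|\to0$, hence $\|\bar z_n-p\|^2-\|\bar u_n-p\|^2\to0$ and so $\|A_{k_n}\bar z_n-A_{k_n}p\|\to0$. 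Feeding this into the firmly nonexpansive refinement of the projection estimate yields $\|\bar u_n-\bar z_n\|\to0$, whence $\|\bar u_n-x_n\|\to0$; because $\bar u_n$ is the farthest among the $u_n^k$, this propagates to $\|u_n^k-x_n\|\to0$ for every $k$. This is the delicate point: one must transfer decay from the single selected index to the whole family, and the farthest-element rule is precisely what makes this work.

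Finally I would identify the weak cluster points. Passing to a subsequence $x_{n_j}\rightharpoonup\hat x$, the regularity estimates give $\bar u_{n_j}\rightharpoonup\hat x$, $\bar z_{n_j}\rightharpoonup\hat x$ and $z_{n_j}^l\rightharpoonup\hat x$. From $y_n^i-\bar u_n=(1-\alpha_n)(S_i\bar u_n-\bar u_n)$ and $\limsup_n\alpha_n<1$ I get $\|(I-S_i)\bar u_n\|\to0$, so demiclosedness (Lemma \ref{lem.demiclose}) gives $\hat x\in F(S_i)$; applying demiclosedness to the nonexpansive maps $P_C(I-\lambda A_k)$ together with $\|u_n^k-\bar z_n\|\to0$ gives $\hat x\in VI(A_k,C)$; and the usual monotonicity-plus-(A3) argument on $f_l(z_n^l,y)+\frac1{r_n}\langle y-z_n^l,z_n^l-x_n\rangle\ge0$, using $\|z_n^l-x_n\|/r_n\to0$ (here $r_n\ge d$ is essential), gives $\hat x\in EP(f_l)$. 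Thus $\hat x\in F$. Setting $p^*=P_Fx_0$, since $\|x_{n+1}-x_0\|\le\|p^*-x_0\|$ and the norm is weakly lower semicontinuous, $\|x_0-\hat x\|\le\liminf_j\|x_0-x_{n_j}\|\le\|x_0-p^*\|$ forces $\hat x=p^*$ by uniqueness of the projection and, simultaneously, $\|x_{n_j}-x_0\|\to\|\hat x-x_0\|$; in a Hilbert space weak convergence plus norm convergence gives $x_{n_j}\to p^*$ strongly, and as the limit is independent of the subsequence the whole sequence converges strongly to $P_Fx_0$.
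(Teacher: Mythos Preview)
Your proposal is correct and follows essentially the same route as the paper's proof: the same seven-step CQ template (closed convex sets, $F\subset C_n\cap Q_n$ by chained nonexpansiveness, monotonicity of $\|x_n-x_0\|$, asymptotic regularity from $x_{n+1}\in C_n$, the $\alpha$-ism estimate and firmly nonexpansive refinement to force $\|\bar u_n-\bar z_n\|\to0$, identification of weak cluster points in $F$, and the Kadec--Klee style finish). The only noticeable difference is in the variational-inequality layer: the paper verifies $\hat x=P_C(I-\lambda A_k)\hat x$ by a direct expansion of $\|\bar z_m-P_C(I-\lambda A_k)\hat x\|^2$, whereas you invoke demiclosedness of $I-P_C(I-\lambda A_k)$ on $\bar z_n$ with $\|u_n^k-\bar z_n\|\to0$; your argument is slightly more economical but relies on the same ingredients.
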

\begin{proof}
We divide the proof of Theorem $\ref{theo:VIPandRNMs1}$ into seven steps.\\
\textbf{Step 1.} We show that $F, C_n, Q_n$ are closed convex subsets of $H$. By Lemmas $\ref{lem:T2000}$, $\ref{Tr-ClosedConvex}$, and $\ref{lem.demiclose}$, $EP(f_l)$, $VI(A_k,C)$, $F(S_i)$ are closed and convex. Hence, $F$ is closed and convex. From the definitions of $C_n, Q_n$, we see that $Q_n$ is closed and convex and $C_n$ is closed. Now, we show that $C_n$ is convex. Indeed, the inequality $\left\|v-\bar{y}_n\right\| \le  \left\|v-x_n\right\|$ is equivalent to
\begin{equation*}
\left\langle v,x_n-\bar{y}_n\right\rangle \le \frac{1}{2}\left(\left\|x_n \right\|^2-\left\|\bar{y}_n \right\|^2\right).
\end{equation*}
This implies that $C_n$ is convex for all $n\ge 0$, and so $\Pi_{C_n\cap Q_n}x_0$ and $\Pi_Fx_0$ are well-defined.\\
\textbf{Step 2.} We show that $F \subset C_n \cap Q_n$ for all $n\ge 0$. We have $y_n^i=\alpha_n x_n-(1-\alpha_n)S_{i} \bar{u}_n$. For every $u\in F$, by the convexity of $\left\|.\right\|^2$ and the nonexpansiveness of $S_{i_n}$, we obtain
\begin{align}
\left\|u-\bar{y}_n\right\|^2=&\left\|u-\alpha_n \bar{u}_n-(1-\alpha_n)S_{i_n} \bar{u}_n\right\|^2\notag \\
=& \left\|u\right\|^2-2\alpha_n\left\langle u,\bar{u}_n\right\rangle -2(1-\alpha_n)\left\langle u,S_{i_n} \bar{u}_n\right\rangle\notag\\
&+\left\|\alpha_n x_n+(1-\alpha_n)S_{i_n}\bar{u}_n\right\|^2\notag\\
\le& \left\|u\right\|^2-2\alpha_n\left\langle u,\bar{u}_n\right\rangle -2(1-\alpha_n)\left\langle u,S_{i_n} \bar{u}_n\right\rangle+\alpha_n\left\|x_n\right\|^2\notag\\
&+(1-\alpha_n)\left\|S_{i_n}\bar{u}_n\right\|^2 \notag\\
=&\alpha_n\left\|u-\bar{u}_n\right\|^2+(1-\alpha_n)\left\|u-S_{i_n} \bar{u}_n\right\|^2\notag\\
\le& \alpha_n\left\|u-\bar{u}_n\right\|^2+(1-\alpha_n)\left\|u-\bar{u}_n\right\|^2\notag\\
= &\left\|u-\bar{u}_n\right\|^2.\label{eq:2.35}
\end{align}
From $(\ref{eq:FirmlyNonexpOfPC})$, the definition of $\bar{u}_n$, and the nonexpansiveness of $P_C(I-\lambda A_{k_n}), T_{r_n}^{f_l}$, we have
\begin{eqnarray}
\left\|u-\bar{u}_n\right\|&=&\left\|P_C (I-\lambda A_{k_n})u-P_C (I-\lambda A_{k_n})\bar{z}_n\right\|\notag\\
&\le& \left\|u-\bar{z}_n\right\|\notag\\
&=&||T_{r_n}^{f_{l_n}}u-T_{r_n}^{f_{l_n}}x_n||\notag\\
&\le&||u-x_n||.\label{eq:2.36}
\end{eqnarray}
From $(\ref{eq:2.35})$ and $(\ref{eq:2.36}),$
\begin{equation}\label{eq:2.37}
\left\|u-\bar{y}_n\right\|\le\left\|u-\bar{z}_n\right\|\le\left\|u-x_n\right\|.
\end{equation}
This implies that $F \subset C_n$ for all $n\ge 0$. Next, we show that $F \subset C_n \cap Q_n$ for all $n\ge 0$ by the induction. Indeed, we have that $C_0=Q_0=C$ and $F \subset C=C_0 \cap Q_0$. Assume that $F\subset C_n \cap Q_n$ for some $n\ge 0$. From $x_{n+1}=P_{C_n\cap Q_n}x_0$ and  $(\ref{eq:EquivalentPC})$, we get
\begin{equation*}
\left\langle x_{n+1}-z,x_0 -x_{n+1}\right\rangle \ge 0
\end{equation*}
for all $z\in C_n \cap Q_n$. Since $F \subset C_n \cap Q_n$, $\left\langle x_{n+1}-z,x_0 -x_{n+1}\right\rangle \ge 0$ for all $z\in F$. This together with the definition of $Q_{n+1}$ implies that $F \subset Q_{n+1}$. Hence $F \subset C_n \cap Q_n$ for all $n\ge 0$.\\
\textbf{Step 3.} We show that $\left\|x_{n}-y_n^i\right\|\to 0$ and $\left\|x_{n}-z_n^l\right\|\to 0$ as $n\to \infty$ for all $i=1,2,\ldots N,l=1,2,\ldots K$. From the definition of $Q_n$ and $(\ref{eq:EquivalentPC})$, we see that  $x_n=P_{Q_n}x_0$. Therefore, for every $u\in F\subset Q_n$, we get
\begin{equation}\label{eq:2.37*}
\left\|x_n-x_0 \right\|^2\le \left\|u-x_0 \right\|^2-\left\|u-x_n \right\|^2\le\left\|u-x_0 \right\|^2.
\end{equation}
This implies that the sequence $\left\{x_n\right\}$ is bounded. From $(\ref{eq:2.36})$, $\left\{u_n^k\right\}$ is bounded.  By the nonexpansiveness of $S_i$, the sequence $\left\{S_i u_n^k\right\},\left\{y_n^i\right\}$ are also bounded.

We have $x_{n+1}=P_{C_n\cap Q_n}x_0\in Q_n, x_n=P_{Q_n}x_0$, from $(\ref{eq:ProperOfPC})$ we get 
\begin{equation}\label{eq:2.38}
\left\|x_n-x_0 \right\|^2\le \left\|x_{n+1}-x_0 \right\|^2-\left\|x_{n+1}-x_n \right\|^2\le \left\|x_{n+1}-x_0 \right\|^2.
\end{equation}
Hence the sequence $\left\{\left\|x_n-x_0 \right\|\right\}$ is nondecreasing, and so there exists the limit of the sequence $\left\{\left\|x_n-x_0 \right\|\right\}$. From $(\ref{eq:2.38})$ we obtain
\begin{equation*}\label{eq:2.39}
\left\|x_{n+1}-x_n \right\|^2\le \left\|x_{n+1}-x_0 \right\|^2-\left\|x_n-x_0 \right\|^2.
\end{equation*}
Taking $n\to\infty$, we obtain
\begin{equation}\label{eq:2.40}
\lim_{n\to\infty}\left\|x_{n+1}-x_n \right\|=0.
\end{equation}
From $x_{n+1}=P_{C_n\cap Q_n}x_0\in C_n$ and the definition of $C_n$, we have that
\begin{equation*}\label{eq:2.41}
\left\|x_{n+1}-\bar{y}_n \right\|\le\left\|x_{n+1}-\bar{z}_n\right\|\le\left\|x_{n+1}-x_n \right\|.
\end{equation*}
Therefore,
\begin{equation}\label{eq:2.42}
\lim_{n\to\infty}\left\|x_{n+1}-\bar{y}_n \right\|=\lim_{n\to\infty}\left\|x_{n+1}-\bar{z}_n \right\|=0.
\end{equation}
By $(\ref{eq:2.40}),(\ref{eq:2.42})$ and the estimate $||x_n-\bar{y}_n||\le||x_n-x_{n+1}||+||x_{n+1}-\bar{y}_n||$, we get
\begin{equation*}\label{eq:2.43}
\lim_{n\to\infty}\left\|x_{n}-\bar{y}_n\right\|=0.
\end{equation*}
From the definition of $i_n$, we obtain
\begin{equation}\label{eq:2.44}
\lim_{n\to\infty}\left\|x_{n}-y_n^i\right\|=0
\end{equation}
for all $i=1,2,\ldots,N$. By arguing similarly to $(\ref{eq:2.44})$, we obtain
\begin{equation}\label{eq:2.44t}
\lim_{n\to\infty}\left\|x_{n}-z_n^l\right\|=0, l=1,2,\ldots,K.
\end{equation}\\
\textbf{Step 4.} We show that $\lim_{n\to\infty}\left\|x_{n}-S_i x_n\right\|=0$. From $y_n^i=\alpha_n x_n+(1-\alpha_n)S_i \bar{u}_n$, we obtain
\begin{align*}
\left\|x_{n}-y_n^i\right\|=(1-\alpha_n)\left\|x_{n}-S_i \bar{u}_n\right\|.
\end{align*}
Therefore,
\begin{align}
\left\|x_{n}-S_i x_n\right\|&\le \left\|x_{n}-S_i \bar{u}_n\right\|+\left\|S_i \bar{u}_n-S_i x_n\right\|\notag\\
&\le \left\|x_{n}-S_i \bar{u}_n\right\|+\left\|\bar{u}_n-x_n\right\|\notag\\
&=\frac{1}{1-\alpha_n}\left\|x_{n}-y_n^i\right\|+\left\|\bar{u}_n-x_n\right\|.\label{eq:2.45}
\end{align}
For every $u\in F$, from $(\ref{eq:FirmlyNonexpOfPC})$ and $(\ref{eq:EquivalentPC})$, we see that 
\begin{align*}
2\left\|u-\bar{u}_n\right\|^2&=2\left\|P_C (u-\lambda A_{k_n}u)-P_C (\bar{z}_n-\lambda A_{k_n}\bar{z}_n)\right\|^2\\
&\le 2\left\langle (u-\lambda A_{k_n}u)-(\bar{z}_n-\lambda A_{k_n}\bar{z}_n), u-\bar{u}_n \right\rangle\\
&=||(u-\lambda A_{k_n}u)-(\bar{z}_n-\lambda A_{k_n}\bar{z}_n)||^2+||u-\bar{u}_n||^2\\
&\quad-||(u-\lambda A_{k_n}u)-(\bar{z}_n-\lambda A_{k_n}\bar{z}_n)-\left(u-\bar{u}_n\right)||^2\\
&\le ||u-\bar{z}_n||^2+||u-\bar{u}_n||^2-||\left(\bar{z}_n-\bar{u}_n\right)-\lambda(A_{k_n}\bar{z}_n-A_{k_n}u)||^2\\
&= ||u-\bar{z}_n||^2+||u-\bar{u}_n||^2-||\bar{u}_n-\bar{z}_n||^2-\lambda^2||A_{k_n}\bar{z}_n-A_{k_n}u||^2\\
&\quad +2\lambda\left\langle \bar{z}_n-\bar{u}_n,A_{k_n}\bar{z}_n-A_{k_n}u\right\rangle.
\end{align*}
Therefore,
\begin{align}
\left\|u-\bar{u}_n\right\|^2&\le||u-\bar{z}_n||^2-||\bar{u}_n-\bar{z}_n||^2+2\lambda\left\langle \bar{z}_n-\bar{u}_n,A_{k_n}\bar{z}_n-A_{k_n}u\right\rangle \notag\\ 
&\le\left(||u-\bar{z}_n||^2-||\bar{u}_n-\bar{z}_n||^2\right)+2\lambda||\bar{u}_n-\bar{z}_n||||A_{k_n}\bar{z}_n-A_{k_n}u||\notag\\
&\le\left(||u-x_n||^2-||\bar{u}_n-\bar{z}_n||^2\right)+2\lambda||\bar{u}_n-\bar{z}_n||||A_{k_n}\bar{z}_n-A_{k_n}u||.\label{eq:2.47*}
\end{align}
From the convexity of $\left\|.\right\|^2$ and the nonexpansiveness of $S_i$ we have
\begin{align}
\left\|u-y_n^i\right\|^2&=\left\|u-(\alpha_n \bar{u}_n+(1-\alpha_n)S_i \bar{u}_n)\right\|^2\notag\\ 
& \le \alpha_n \left\|u-\bar{u}_n\right\|^2+(1-\alpha_n)\left\|u-S_i \bar{u}_n\right\|^2\notag\\
&\le \alpha_n \left\|u-\bar{u}_n\right\|^2+(1-\alpha_n)\left\|u-\bar{u}_n\right\|^2\notag\\
&=\left\|u-\bar{u}_n\right\|^2\notag\\
&=\left\|P_C(u-\lambda A_{k_n}u)-P_C(\bar{z}_n-\lambda A_{k_n}\bar{z}_n)\right\|^2\notag\\
&\le \left\|(u-\lambda A_{k_n}u)-(\bar{z}_n-\lambda A_{k_n}\bar{z}_n)\right\|^2\notag\\
&= \left\|\lambda(A_{k_n}\bar{z}_n-A_{k_n}u)-(\bar{z}_n-u)\right\|^2\notag\\
&= \lambda^2\left\|A_{k_n}\bar{z}_n-A_{k_n}u\right\|^2-2\lambda\left\langle A_{k_n}\bar{z}_n-A_{k_n}u,\bar{z}_n-u\right\rangle+||\bar{z}_n-u||^2\notag\\
&\le \left\|u-x_n\right\|^2-\lambda(2\alpha-\lambda)\left\|A_{k_n}\bar{z}_n-A_{k_n}u\right\|^2.\label{eq:2.49}
\end{align}
This implies that 
\begin{equation}\label{eq:2.50}
\lambda(2\alpha-\lambda)\left\|A_{k_n}\bar{z}_n-A_{k_n}u\right\|^2\le \left\|u-x_n\right\|^2-\left\|u-y_n^i\right\|^2.
\end{equation}
We have
\begin{align*}
\left|\left\|u-x_n\right\|^2-\left\|u-y_n^i\right\|^2\right|&=\left|\left\|u-x_n\right\|-\left\|u-y_n^i\right\|\right|\left(\left\|u-x_n\right\|+\left\|u-y_n^i\right\|\right)\\ 
& \le \left\|x_n-y_n^i\right\|\left(\left\|u-x_n\right\|+\left\|u-y_n^i\right\|\right).
\end{align*}
By the boundedness of $\left\{x_n\right\},\left\{y_n^i\right\}$ and $(\ref{eq:2.44})$, we obtain
\begin{equation}\label{eq:2.49*}
\left\|u-x_n\right\|^2-\left\|u-y_n^i\right\|^2 \to 0.
\end{equation}
The last relation and $(\ref{eq:2.50})$ imply that 
\begin{equation}\label{eq:2.51}
\lim_{n\to\infty}\left\|A_{k_n}\bar{z}_n-A_{k_n}u\right\|=0.
\end{equation}
From $(\ref{eq:2.47*})$ and $(\ref{eq:2.49})$,  we obtain
\begin{align*}
\left\|u-y_n^i\right\|^2&\le\left\|u-\bar{u}_n\right\|^2\notag\\ 
& \le\left(||u-x_n||^2-||\bar{u}_n-\bar{z}_n||^2\right)+2\lambda||\bar{u}_n-\bar{z}_n||||A_{k_n}\bar{z}_n-A_{k_n}u||.
\end{align*}
Therefore,
\begin{equation}\label{eq:2.51*}
||\bar{u}_n-\bar{z}_n||^2\le\left(\left\|u-x_n\right\|^2-\left\|u-y_n^i\right\|^2\right)+2\lambda||\bar{u}_n-x_n||||A_{k_n}x_n-A_{k_n}u||.
\end{equation}
From $(\ref{eq:2.49*}), (\ref{eq:2.51}),(\ref{eq:2.51*})$ and $0<\lambda<2\alpha$, we get
\begin{equation}\label{eq:2.52}
\lim_{n\to\infty}\left\|\bar{z}_n-\bar{u}_n\right\|=0.
\end{equation}
Since $||x_n-\bar{z}_n||\to 0$ and $||x_n-\bar{u}_n||\le||x_n-\bar{z}_n||+||\bar{z}_n-\bar{u}_n||$,
\begin{equation*}
\lim_{n\to\infty}\left\|x_n-\bar{u}_n\right\|=0.
\end{equation*}
This together with $(\ref{eq:2.44}), (\ref{eq:2.45})$ implies that 
\begin{equation}\label{eq:2.53}
\lim_{n\to\infty}\left\|x_n-S_ix_n\right\|=0
\end{equation}
for all $i=1,2,\ldots,N$. By the boundedness of $\left\{x_n\right\}$, there exists a subsequence $\left\{x_m\right\}$ of $\left\{x_n\right\}$ converging weakly to $\widehat{x}\in C$. From $(\ref{eq:2.53})$ and Lemma $\ref{lem.demiclose}$, $\widehat{x}\in F(S_i)$ for all $i=1,2,\ldots,N$. Hence, $\widehat{x}\in \bigcap_{i=1}^N F(T_i)$.\\
\textbf{Step 5.} Now we show that $\widehat{x}\in \bigcap_{k=1}^MVI(A_k,C)$. Indeed, we have that
\begin{equation*}\label{eq:2.53t1}
||u_m^k-\bar{z}_m||\le||u_m^k-x_m||+||x_m-\bar{z}_m||.
\end{equation*}
Therefore, $||u_m^k-\bar{z}_m||\to 0$ as $m\to\infty$. Note that, we also have  $u_m^k\rightharpoonup \widehat{x}$ and $\bar{z}_m\rightharpoonup \widehat{x}$ as $m\to\infty$. We have
\begin{align}
\left\|\bar{z}_m-P_C(I-\lambda A_k)\widehat{x}\right\|^2=\left\|\bar{z}_m-\widehat{x}\right\|^2+&2\left\langle \bar{z}_m-\widehat{x},\widehat{x}-P_C(I-\lambda A_k)\widehat{x}\right\rangle \notag \\ 
&+\left\|\widehat{x}-P_C(I-\lambda A_k)\widehat{x}\right\|^2. \label{eq:2.54}
\end{align}
Moreover, from $u^k_m=P_C(I-\lambda A_k)\bar{z}_m$ and the nonexpansiveness of $P_C(I-\lambda A_k)$, one has
\begin{align}
\left\|\bar{z}_m-P_C(I-\lambda A_k)\widehat{x}\right\|^2&\le \left(\left\|\bar{z}_m-u_m^k\right\|+\left\|P_C(I-\lambda A_k)\bar{z}_m-P_C(I-\lambda A_k)\widehat{x}\right\|\right)^2 \notag\\
&\le \left(\left\|\bar{z}_m-u^k_m\right\|+\left\|\bar{z}_m-\widehat{x}\right\|\right)^2.\label{eq:2.55}
\end{align}
From $(\ref{eq:2.54}),(\ref{eq:2.55})$ we get
\begin{align*}
\left\|\widehat{x}-P_C(I-\lambda A_k)\widehat{x}\right\|^2\le \left\|\bar{z}_m-u^k_m\right\|^2&+2\left\|\bar{z}_m-u^k_m\right\|\left\|\bar{z}_m-\widehat{x}\right\|\\
&-2\left\langle \bar{z}_m-\widehat{x},\widehat{x}-P_C(I-\lambda A_k)\widehat{x}\right\rangle.
\end{align*}
Letting $m\to\infty$, we obtain 
\begin{equation*}
\widehat{x}=P_C(I-\lambda A_k)\widehat{x}.
\end{equation*}
By $(\ref{eq:EquivalentVIP})$, $\widehat{x}\in VI(A_k,C)$ for all $k=1,2,\ldots,M$.\\
\textbf{Step 6.} We show that $\widehat{x}\in \bigcap_{l=1}^K EP(f_l)$.\\
Note that $\lim_{n\to\infty}\left\|z_m^l-x_m\right\|=0$. This together $r_m\ge d>0$ implies that
\begin{equation}\label{eq:2.12}
\lim_{m\to\infty}\frac{\left\|z_m^l-x_m\right\|}{r_m}=0.
\end{equation}
We have that $z_m^{l}=T_{r_m}^{f_l}x_m$, i.e.,
\begin{equation}\label{eq:2.13}
f_l(z_m^l,y)+\frac{1}{r_m}\left\langle y-z_m^l,z_m^l-x_m\right\rangle \ge 0 \quad \forall y\in C.
\end{equation}
From $(\ref{eq:2.13})$ and $(A2)$, we get
\begin{equation}\label{eq:2.14}
\frac{1}{r_m}\left\langle y-z_m^l,z_m^l-x_m\right\rangle \ge -f_l(z_m^l,y)\ge f_k(y,z_m^l) \quad \forall y\in C.
\end{equation}
Taking $m\to\infty$, by $(\ref{eq:2.12}),(\ref{eq:2.14})$ and $(A4)$, we obtain
\begin{equation}\label{eq:2.15}
f_l(y,\widehat{x})\le 0,\, \forall y\in C.
\end{equation}
For $0<t\le 1$ and $y\in C$, putting $y_t=ty+(1-t)\widehat{x}$. Since $y\in C$ and $\widehat{x}\in C$, $y_t \in C$. Hence, for small sufficient $t$, from $(A1),(A3)$ and $(\ref{eq:2.15})$, we have that
\begin{equation*}
f_l(y_t,\widehat{x})=f_l(ty+(1-t)\widehat{x},\widehat{x})\le 0.
\end{equation*}
By $(A1),(A4)$, we have that
\begin{align*}
0&=f_l(y_t,y_t)\\ 
&=f_l(y_t,ty+(1-t)\widehat{x}) \\
&\le tf_l(y_t,y)+(1-t)f(y_t,\widehat{x})\\
&\le tf_l(y_t,y).
\end{align*}
Dividing both sides of the last inequality by $t>0$, we obtain $f_l(y_t,y)\ge 0$ for all $y\in C$, i.e., 
$$ f_l(ty+(1-t)\widehat{x},y)\ge 0,\, \forall y\in C. $$
Taking $t\to 0^+$, from $(A3)$, we get $f_l(\widehat{x},y)\ge 0,\, \forall y\in C$ and $l=1,2,\ldots,K$, i.e, $\widehat{x}\in \cap_{l=1}^K EP(f_l)$. Therefore, $\widehat{x}\in F.$\\
\textbf{Step 7.} We show that $x_n\to P_{F}x_0$. Setting $w=P_{F}x_0$. From  $(\ref{eq:2.37*})$, we get
\begin{equation*}
\left\|x_m-x_0\right\|\le \left\|w-x_0\right\|.
\end{equation*}
By the lower weak continuity of $\left\|.\right\|$ we have
\begin{equation*}
\left\|\widehat{x}-x_0\right\|\le \lim_{m\to\infty}\inf \left\|x_m-x_0\right\|\le \lim_{m\to\infty}\sup \left\|x_m-x_0\right\|\le \left\|w-x_0\right\|.
\end{equation*}
By the definition of $w$, $\widehat{x}=w$ and $\lim_{m\to\infty}\left\|x_m-x_0\right\|=\left\|\widehat{x}-x_0\right\|$. This implies that
\begin{equation*}
\lim_{m\to\infty}\left\|x_m\right\|=\left\|\widehat{x}\right\|.
\end{equation*}
Therefore, $\lim_{m\to\infty}x_m=\widehat{x}$. Assume that $\left\{x_k\right\}$ is an any subsequence of $\left\{x_n\right\}$. By arguing similarly to above proof, $x_k\to P_{F}x_0$ as $k\to\infty$. Hence, $x_n\to P_{F}x_0$ as $n\to\infty$. The proof of Theorem $\ref{theo:VIPandRNMs1}$ is complete.
\end{proof}
Now, we consider the ill - posed system of the operator equations
\begin{equation}\label{eq:2.62}
A_i(x)=0,\,,x\in H,\,i=1,2,\ldots, N.
\end{equation}
where $A_i:H \to H$ are possibly nonlinear operators on $H$. Let $S$ denote by the set of solutions of the system $(\ref{eq:2.62})$. An element $x^\dagger$ is called $x_0$ - minimize norm solution of the system $(\ref{eq:2.62})$ if $x^\dagger \in S$ and satisfies
\begin{equation*}\label{eq:2.63}
\left\|x^\dagger -x_0 \right\|=\min\left\{\left\|z-x_0\right\|:z\in S\right\}.
\end{equation*}
If $x_0=0$ then $x^\dagger$ is said simply to be the minimize norm solution. Several sequential and parallel iterative regularization methods \cite{ABH2014,AC2013,BK2006,CHLS2008,HLS2007} have been proposed for finding a solution of the system $(\ref{eq:2.62})$. Using Theorem $\ref{theo:VIPandRNMs1}$, we also obtain the following result:
\begin{corollary}\label{cor.3}
Let $A_i:H\to H,i=1,2,\ldots, N$ be a finite family of $\alpha$ - inverse strongly monotone mappings with the set of solutions $S$ being nonempty. The sequence $\left\{x_n\right\}$ is generated by the following manner: 
\begin{equation*}\label{eq:corollary3}
\begin{cases}
&x_0\in H,\\
&i_n:=\arg\max\left\{\left\|A_i x_n \right\|:i=1,\ldots,N\right\}, \bar{A}_n:=A_{i_n}\\
 &C_n=\left\{v\in H:\left\langle v,\bar{A}_nx_n\right\rangle\le \left\langle x_n-\mu \bar{A}_nx_n,\bar{A}_nx_n\right\rangle\right\},\\
&Q_n=\left\{v\in H: \left\langle v, x_0-x_n\right\rangle \le \left\langle x_n,x_0-x_n\right\rangle\right\},\\
&x_{n+1}=P_{C_n\cap Q_n}x_0,n\ge 0.
\end{cases}
\end{equation*}
where $\mu \in (0,\alpha)$. Then $\left\{x_n\right\}$ converges strongly to the $x_0$ - minimize norm solution $x^\dagger$ of the system $(\ref{eq:2.62})$.
\end{corollary}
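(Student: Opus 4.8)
The plan is to recover Corollary~\ref{cor.3} as the specialization of Theorem~\ref{theo:VIPandRNMs1} to the ambient set $C=H$, with the equilibrium and fixed-point data chosen trivially. First I would record the key observation that when $C=H$ the variational inequality $VI(A_i,H)$ collapses to the equation $A_ix=0$: testing $\langle A_ip^*,p-p^*\rangle\ge 0$ with $p=p^*\pm tv$ for arbitrary $v\in H$ and $t>0$ forces $A_ip^*=0$. Hence $S=\bigcap_{i=1}^N VI(A_i,H)$, and it suffices to run the scheme \eqref{eq:Algorithm1} with the $N$ given operators playing the role of the family $\{A_k\}_{k=1}^M$ (so $M=N$), a single bifunction $f_1\equiv 0$, a single nonexpansive map $S_1=I$, parameters $\alpha_n\equiv 0$ and $r_n\equiv 1$, and step size $\lambda=2\mu$.

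Next I would check that each intermediate quantity in \eqref{eq:Algorithm1} degenerates to the corresponding object of the corollary. Because $f_1\equiv 0$, the resolvent $T_{r_n}^{f_1}$ is the identity, so $\bar z_n=x_n$; because $P_H=I$, we get $u_n^k=x_n-\lambda A_kx_n$, whence $\|u_n^k-x_n\|=\lambda\|A_kx_n\|$ and the index $k_n=\arg\max_k\|A_kx_n\|$ coincides with the corollary's $i_n$, giving $\bar u_n=x_n-2\mu\bar A_nx_n$; and because $S_1=I$ with $\alpha_n=0$, we have $y_n^1=\bar u_n$, so $\bar y_n=\bar u_n$. Consequently the chain $\|v-\bar y_n\|\le\|v-\bar z_n\|\le\|v-x_n\|$ defining $C_n$ collapses to the single half-space inequality $\|v-\bar u_n\|\le\|v-x_n\|$.

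The one computation that needs care is the identification of this half-space with the corollary's $C_n$. Expanding $\|v-\bar u_n\|^2\le\|v-x_n\|^2$ and using $x_n-\bar u_n=2\mu\bar A_nx_n$ yields, after dividing by $4\mu>0$,
\begin{equation*}
\langle v,\bar A_nx_n\rangle\le\langle x_n,\bar A_nx_n\rangle-\mu\|\bar A_nx_n\|^2=\langle x_n-\mu\bar A_nx_n,\bar A_nx_n\rangle,
\end{equation*}
which is exactly the set $C_n$ of Corollary~\ref{cor.3}; the factor $\tfrac12$ inherent in the half-space is precisely what turns the step $\lambda=2\mu$ into the coefficient $\mu$. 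Rewriting $Q_n$ as $\{v:\langle x_0-x_n,x_n-v\rangle\ge 0\}$ shows it agrees verbatim with the $Q_n$ of \eqref{eq:Algorithm1}. Finally I would verify the hypotheses of Theorem~\ref{theo:VIPandRNMs1}: the family $\{A_k\}$ is $\alpha$-inverse strongly monotone, $\mu\in(0,\alpha)$ gives $\lambda=2\mu\in(0,2\alpha)$, conditions $(i)$--$(ii)$ hold for $\alpha_n\equiv 0$ and $r_n\equiv 1$, and the common set is $F=\big(\cap EP(f_l)\big)\cap\big(\cap F(S_i)\big)\cap\big(\cap VI(A_k,H)\big)=H\cap H\cap S=S$, nonempty by assumption. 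Theorem~\ref{theo:VIPandRNMs1} then gives $x_n\to P_Fx_0=P_Sx_0$, and since $P_Sx_0$ is by definition the point of $S$ nearest to $x_0$, this limit is the $x_0$-minimize norm solution $x^\dagger$. There is no genuine analytic obstacle here: the whole content is the reduction, and the only place demanding attention is the half-space algebra matching the step size $2\mu$ to the coefficient $\mu$.
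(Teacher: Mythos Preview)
Your proof is correct and follows exactly the same route as the paper: specialize Theorem~\ref{theo:VIPandRNMs1} with $C=H$, $\lambda=2\mu$, $\alpha_n=0$, $S_i=I$, and $f_l\equiv 0$. The paper's proof is a single sentence, so your version simply fills in the verifications (that $T_{r_n}^{f_l}=I$, that $VI(A_i,H)$ is the zero set, and the half-space algebra matching $C_n$) which the paper leaves implicit.
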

\begin{proof}
Putting $C=H$, $\lambda=2\mu$, $\alpha_n=0$ for all $n\ge 0$, $S_i=I$, $f_l(x,y)=0$. Using Theorem $\ref{theo:VIPandRNMs1}$, we obtain the desired result. 
\end{proof}
Next, deals with the problem finding a common element of the set of solutions of a system of variational inequalities for $\alpha$ - inverse strongly monotone operators $\left\{A_k\right\}_{k=1}^M$ and the set of fixed points of a finite family of nonexpansive mappings $\left\{S_i\right\}_{i=1}^N$. One can employ the method $(\ref{eq:Algorithm1})$ to find this common element. We obtain the following result:
\begin{corollary}\label{cor:Algorithm3}
Let $\left\{A_k\right\}_{k=1}^M:C\to H$ be a finite family of $\alpha$ - inverse strongly monotone operators, $\left\{S_i\right\}_{i=1}^N:C\to C$ be a finite family of nonexpansive mappings. Assume that the set $F=\left(\cap_{i=1}^N F(S_i)\right)\bigcap \left(\cap_{k=1}^M VI(A_k,C)\right)$ is nonempty. Let $\left\{x_n\right\}$ be the sequence generated by the following manner: 
\begin{equation}\label{eq:Algorithm3}
\begin{cases}
&x_0\in H, C_0=Q_0=C,\\ 
&z_n=P_Cx_n,\\
&u_n^k=P_C (z_n-\lambda A_kz_n),k=1,\ldots, M,\\
& k_n:=\arg\max\left\{\left\|u_n^k -x_n \right\|:k=1,\ldots,M\right\}, \bar{u}_n:=u_n^{k_n},\\
&y_n^i=\alpha_n \bar{u}_n+(1-\alpha_n)S_i \bar{u}_n, i=1,\ldots, N,\\
& i_n:=\arg\max\left\{\left\|y_n^i -x_n \right\|:i=1,\ldots,N\right\}, \bar{y}_n:=y_n^{i_n},\\
 &C_n=\left\{v\in H:\left\|v-\bar{y}_n\right\| \le\left\|v-z_n\right\|\le \left\|v-x_n\right\|\right\},\\
&Q_n=\left\{v\in H: \left\langle x_0-x_n,x_n-v\right\rangle \ge 0\right\},\\
&x_{n+1}=P_{C_n\cap Q_n}x_0,n\ge 0.
\end{cases}
\end{equation}
where, $\lambda\in (0;2\alpha)$ and $\left\{\alpha_n\right\}\subset [0,1], \lim\sup_{n\to \infty}\alpha_n <1$. Then the sequence $\left\{x_n\right\}$ converges strongly to $P_{F}x_0$.
\end{corollary}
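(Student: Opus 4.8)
The plan is to deduce Corollary \ref{cor:Algorithm3} from Theorem \ref{theo:VIPandRNMs1} by specializing the system of equilibrium problems to the trivial one. Concretely, I would take $K=1$ and choose the single bifunction $f_1\equiv 0$ on $C\times C$, together with any admissible regularization sequence, say $r_n\equiv 1$ (so that condition $(ii)$ of the theorem holds with $d=1$). The control sequence $\left\{\alpha_n\right\}$ is kept unchanged and already satisfies condition $(i)$ by hypothesis.

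First I would verify that $f\equiv 0$ fulfills $(A1)$--$(A4)$: $(A1)$ and $(A4)$ are immediate, $(A2)$ holds since $f(x,y)+f(y,x)=0\le 0$, and $(A3)$ holds because the relevant limit is $0=f(x,y)$. The decisive identification is that the resolvent of the zero bifunction is the metric projection, i.e.\ $T_r^0 x=P_C x$ for every $r>0$. This follows by comparing characterizations: $z=T_r^0 x$ means $\left\langle y-z,z-x\right\rangle\ge 0$ for all $y\in C$, which is exactly $\left\langle x-z,z-y\right\rangle\ge 0$ for all $y\in C$, and by $(\ref{eq:EquivalentPC})$ this is equivalent to $z=P_C x$. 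Moreover $EP(0)=\left\{x\in C:0\ge 0\right\}=C$, so this factor contributes nothing to the common solution set, and the set $F$ of the theorem collapses to $F=\left(\cap_{i=1}^N F(S_i)\right)\bigcap\left(\cap_{k=1}^M VI(A_k,C)\right)$, matching the hypothesis of the corollary.

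With these choices I would then check that iteration $(\ref{eq:Algorithm1})$ literally becomes $(\ref{eq:Algorithm3})$. Since $K=1$, the equilibrium step produces the single point $z_n^1=T_{r_n}^0 x_n=P_C x_n$, and the associated $\arg\max$ is trivial, giving $\bar{z}_n=z_n^1=P_C x_n=z_n$. Substituting $\bar{z}_n=z_n$ throughout, the variational-inequality step $u_n^k=P_C(\bar{z}_n-\lambda A_k\bar{z}_n)$, the nonexpansive step $y_n^i=\alpha_n\bar{u}_n+(1-\alpha_n)S_i\bar{u}_n$, and the sets $C_n=\left\{v:\left\|v-\bar{y}_n\right\|\le\left\|v-z_n\right\|\le\left\|v-x_n\right\|\right\}$ and $Q_n$ coincide exactly with those in $(\ref{eq:Algorithm3})$. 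Hence the sequence generated by $(\ref{eq:Algorithm3})$ is precisely the one produced by $(\ref{eq:Algorithm1})$ under this specialization.

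Finally, invoking Theorem \ref{theo:VIPandRNMs1} yields strong convergence of $\left\{x_n\right\}$ to $P_F x_0$, which is the claim. I do not expect any genuine obstacle here; the only point requiring care is the identity $T_r^0=P_C$, but this is an immediate consequence of the variational characterization $(\ref{eq:EquivalentPC})$ of the projection. The argument parallels the reduction already used in the proof of Corollary \ref{cor.3}.
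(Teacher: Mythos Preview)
Your proposal is correct and follows essentially the same reduction as the paper: set the bifunctions identically zero with $r_n=1$, observe that $T_{r_n}^{0}=P_C$ so that $\bar{z}_n=P_Cx_n=z_n$, and then Theorem~\ref{theo:VIPandRNMs1} yields the result. The paper's proof is a one-line sketch (taking all $f_l\equiv 0$ rather than $K=1$, which is immaterial since every $z_n^l$ then equals $P_Cx_n$), while you have simply filled in the routine verifications.
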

\begin{proof}
Putting $f_l(x,y)=0$ for all $l=1,2,\ldots, K$ and $r_n=1$. Then $T_{r_n}^{f_l}x=P_Cx$ for all $x\in H$. The proof of  Corollary $\ref{cor:Algorithm3}$ follows from Theorem $\ref{theo:VIPandRNMs1}$.
\end{proof}
However, the subset $C_{n}$ in the method $(\ref{eq:Algorithm3})$ is complex. 
 Moreover, the projection $P_{C_n\cap Q_n}x_0$ in each iterative step, in general, is difficult to find it. One assumes that $P_Cx$ can be calculated easily \cite{BBL1997,SS2000}. To overcome the complexity caused by $C_n$ and $P_{C_n\cap Q_n}$, we propose the following parallel modified algorithm:
\begin{algorithm}\label{algorithm3}
Let $x_0\in H$ be an arbitrary chosen element , $\left\{\alpha_n \right\}$ be in $[0,1]$, and $\lambda \in (0;2\alpha)$. Assume that $x_n$ is known for some $n\ge 0$.\\
\textbf{Step 1.} Calculate $z_n=P_C(x_n)$.\\
\textbf{Step 2.} Calculate the intermediate approximations $u_n^k$ in parallel
\begin{equation*}\label{eq:2.54t}
u_n^k=P_C (z_n-\lambda A_k(z_n)), k=1,2,\ldots,M.
\end{equation*}
\textbf{Step 3.} Find $k_n=\arg\max\left\{\left\|u_n^k -x_n \right\|:k=1,\ldots,M\right\}$. Put $\bar{u}_n:=u_n^{k_n}$.\\
\textbf{Step 4.} Calculate the intermediate approximations $y_n^i$ in parallel
\begin{equation*}\label{eq:2.55t}
y_n^i=\alpha_n \bar{u}_n+(1-\alpha_n)S_i \bar{u}_n,i=1,2,\ldots,N.
\end{equation*}
\textbf{Step 5.} Find $i_n=\arg\max\left\{\left\|y_n^i -x_n \right\|:i=1,\ldots,N\right\}$. Put $\bar{y}_n:=y_n^{i_n}$.\\
\textbf{Step 6.} If $||\bar{y}_n-x_n||=0$ then stop. Else, move to \textbf{Step 7}.\\
\textbf{Step 7.} Define
\begin{align*}
&C_n=\left\{v\in H:\left\|v-\bar{y}_n\right\| \le \left\|v-x_n\right\|\right\},\\
&Q_n=\left\{v\in H: \left\langle x_0-x_n,x_n-v\right\rangle \ge 0\right\}.
\end{align*}
\textbf{Step 8.} Perform 
\begin{equation*}
x_{n+1}=P_{C_n \cap Q_n}x_0.
\end{equation*}
\textbf{Step 9.} If $x_{n+1}=x_n$ then stop. Else, set $n:=n+1$ and return \textbf{Step 1}.
\end{algorithm}

Clearly, in every iterative step of Algorithm $\ref{algorithm3}$, $C_n$ and $Q_n$ are either $H$ or the half spaces. Therefore, by calculating similarly in \cite{SS2000}, we can obtain $x_{n+1}=P_{C_n\cap Q_n}x_0$ easily. Indeed, we see that $\left\|v-\bar{y}_n\right\| \le \left\|v-x_n\right\|$ is equivalent to 
\begin{equation*}
\left\langle v-\frac{x_n+\bar{y}_n}{2},x_n-\bar{y}_n\right\rangle\le 0.
\end{equation*}
Therefore, we obtain that \cite[Algorithm 1]{SS2000}
\begin{equation}\label{eq:2.56t}
x_{n+1}:=P_{C_n}x_0=x_0-\frac{\left\langle x_n -\bar{y}_n,x_0-\frac{(x_n+\bar{y}_n)}{2}\right\rangle}{||x_n -\bar{y}_n||^2}\left(x_n -\bar{y}_n\right),
\end{equation}
if $P_{C_n}x_0\in Q_n$. Else
\begin{equation}\label{eq:2.57t}
x_{n+1}=P_{C_n \cap Q_n}x_0:=x_0+\lambda_1(x_n -\bar{y}_n)+\lambda_2(x_0-x_n),
\end{equation}
where $\lambda_1,\lambda_2$ is the solution of the system of two linear equations
\begin{equation*}
\begin{cases}
&\lambda_1||x_n -\bar{y}_n||^2+\lambda_2\left\langle x_n -\bar{y}_n,x_0-x_n\right\rangle =-\left\langle x_0-\frac{x_n+\bar{y}_n}{2},x_n -\bar{y}_n\right\rangle\\
&\lambda_1\left\langle x_n -\bar{y}_n,x_0-x_n\right\rangle+\lambda_2||x_0 -x_n||^2=-||x_0 -x_n||^2.
\end{cases}
\end{equation*}
\begin{theorem}\label{theo:VIPandRNMs3}
Let $\left\{A_k\right\}_{k=1}^M:C\to H$ be a finite family of $\alpha$ - inverse strongly monotone operators and $\left\{S_i\right\}_{i=1}^N:C\to C$ be a finite family of nonexpansive mappings such that $F=\left(\cap_{i=1}^N F(S_i)\right)\bigcap \left(\cap_{k=1}^M VI(A_k,C)\right)\ne \O$. Assume that the sequence $\left\{\alpha_n\right\}\subset [0,1]$ satisfies $\lim\sup_{n\to \infty}\alpha_n <1$. Then the sequence $\left\{x_n\right\}$ generated by Algorithm $\ref{algorithm3}$ converges strongly to $P_{F}x_0$.
\end{theorem}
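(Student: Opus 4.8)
The plan is to follow the seven-step scheme used for Theorem \ref{theo:VIPandRNMs1}, specialized to the absence of equilibrium bifunctions and adapted to the simpler half-space $C_n$. First I would verify that $F$, $C_n$ and $Q_n$ are closed and convex: since $\|v-\bar{y}_n\|\le\|v-x_n\|$ is equivalent to the linear constraint $\langle v,x_n-\bar{y}_n\rangle\le\frac{1}{2}(\|x_n\|^2-\|\bar{y}_n\|^2)$, the set $C_n$ is now a single half-space (or all of $H$), and $Q_n$ is likewise a half-space, so $P_{C_n\cap Q_n}x_0$ is well defined and given explicitly by $(\ref{eq:2.56t})$--$(\ref{eq:2.57t})$, while $F$ is closed and convex exactly as in Step 1 of Theorem \ref{theo:VIPandRNMs1}.

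Next I would prove $F\subset C_n\cap Q_n$ for all $n$. For $u\in F$ one has $u\in C$, $u=P_Cu$ and $u=P_C(u-\lambda A_{k_n}u)$ by $(\ref{eq:EquivalentVIP})$, so the chain $\|u-\bar{y}_n\|\le\|u-\bar{u}_n\|\le\|u-z_n\|\le\|u-x_n\|$ holds: the first inequality is $(\ref{eq:2.35})$, the second follows from nonexpansiveness of $P_C(I-\lambda A_{k_n})$ with $z_n=P_Cx_n$ in place of $\bar{z}_n$ as in $(\ref{eq:2.36})$, and the third from nonexpansiveness of $P_C$. Hence $u\in C_n$, and $F\subset Q_n$ follows by induction on $n$ from $x_{n+1}=P_{C_n\cap Q_n}x_0$ and $(\ref{eq:EquivalentPC})$, just as before.

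The crux, compensating for the omitted middle constraint $\|v-z_n\|\le\|v-x_n\|$, is as follows. The identity $x_n=P_{Q_n}x_0$ makes $\{\|x_n-x_0\|\}$ nondecreasing and bounded, so $\|x_{n+1}-x_n\|\to0$; then $x_{n+1}\in C_n$ gives $\|x_{n+1}-\bar{y}_n\|\le\|x_{n+1}-x_n\|\to0$, whence $\|x_n-\bar{y}_n\|\to0$ and $\|x_n-y_n^i\|\to0$ for every $i$ by the choice of $i_n$. Now $\bar{u}_n\in C$ and $S_{i_n}\bar{u}_n\in C$, so convexity of $C$ gives $\bar{y}_n\in C$, and therefore $\|x_n-z_n\|=\|x_n-P_Cx_n\|=d(x_n,C)\le\|x_n-\bar{y}_n\|\to0$. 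This single estimate is the genuinely new ingredient and is where I expect the essential work to lie; everything else is a transcription of the earlier proof.

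With $\|x_n-z_n\|\to0$ available, the rest proceeds as in Theorem \ref{theo:VIPandRNMs1}. The inverse-strong-monotonicity bound $(\ref{eq:2.49})$ (with $z_n$ for $\bar{z}_n$) together with $\|x_n-y_n^i\|\to0$ yields $\|A_{k_n}z_n-A_{k_n}u\|\to0$; the firmly nonexpansive estimate $(\ref{eq:2.47*})$ then gives $\|z_n-\bar{u}_n\|\to0$, hence $\|x_n-\bar{u}_n\|\le\|x_n-z_n\|+\|z_n-\bar{u}_n\|\to0$, and with $(\ref{eq:2.45})$ this forces $\|x_n-S_ix_n\|\to0$ for all $i$. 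Passing to a subsequence $x_m\rightharpoonup\widehat{x}$, Lemma \ref{lem.demiclose} gives $\widehat{x}\in\cap_{i=1}^N F(S_i)$, while $z_m\rightharpoonup\widehat{x}$ together with $\|z_m-\bar{u}_m\|\to0$ gives $\widehat{x}=P_C(I-\lambda A_k)\widehat{x}$, i.e. $\widehat{x}\in\cap_{k=1}^M VI(A_k,C)$; thus $\widehat{x}\in F$. The Step 7 argument, using $\|x_m-x_0\|\le\|P_Fx_0-x_0\|$ and weak lower semicontinuity of the norm, then identifies $\widehat{x}=P_Fx_0$ and upgrades weak to strong convergence of the whole sequence.
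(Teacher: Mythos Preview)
Your proposal is correct and follows essentially the same route as the paper's proof. The one step you flag as ``genuinely new'' --- that $\bar y_n\in C$ forces $\|x_n-z_n\|=d(x_n,C)\le\|x_n-\bar y_n\|\to0$ --- is exactly the paper's compensating ingredient as well, though the paper phrases it via nonexpansiveness of $P_C$ (since $y_n^i\in C$, $\|z_n-y_n^i\|=\|P_Cx_n-P_Cy_n^i\|\le\|x_n-y_n^i\|$, then triangle inequality). After that, the paper carries out the demiclosedness argument with $z_n$ (deriving $\|z_n-S_iz_n\|\to0$) whereas you stay with $x_n$; since $\|x_n-z_n\|\to0$ these are interchangeable. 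One small point: to conclude $\widehat x\in VI(A_k,C)$ for \emph{every} $k$ you need $\|z_m-u_m^k\|\to0$, not just $\|z_m-\bar u_m\|\to0$; this follows from the maximality of $k_n$, which gives $\|u_n^k-x_n\|\le\|\bar u_n-x_n\|\to0$.
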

\begin{proof}
By arguing similarly to the proof of Theorem $\ref{theo:VIPandRNMs1}$ we obtain $F, C_n, Q_n$ are closed convex subsets of $C$. Now, we show that $F \subset C_n \cap Q_n$. For every $u\in F$, by the convexity of $\left\|.\right\|^2$ and the nonexpansiveness of $S_{i_n}$, we obtain
\begin{align*}
\left\|u-\bar{y}_n\right\|^2=&\left\|u-\alpha_n \bar{u}_n-(1-\alpha_n)S_{i_n} \bar{u}_n\right\|^2\notag \\
=& \left\|u\right\|^2-2\alpha_n\left\langle u,\bar{u}_n\right\rangle -2(1-\alpha_n)\left\langle u,S_{i_n} \bar{u}_n\right\rangle\notag\\
&+\left\|\alpha_n \bar{u}_n+(1-\alpha_n)S_{i_n} \bar{u}_n \right\|^2\notag\\
\le& \left\|u\right\|^2-2\alpha_n\left\langle u,\bar{u}_n\right\rangle -2(1-\alpha_n)\left\langle u,S_{i_n} \bar{u}_n\right\rangle+\alpha_n\left\|\bar{u}_n\right\|^2\notag\\
&+(1-\alpha_n)\left\|S_{i_n} \bar{u}_n\right\|^2 \notag\\
=&\alpha_n\left\|u-\bar{u}_n\right\|^2+(1-\alpha_n)\left\|u-S_{i_n} \bar{u}_n\right\|^2\notag\\
\le& \alpha_n\left\|u-\bar{u}_n\right\|^2+(1-\alpha_n)\left\|u-\bar{u}_n\right\|^2 \notag\\
=&\left\|u-\bar{u}_n\right\|^2\label{eq:2.54t}
\end{align*}
From the definition of $\bar{u}_n$, $(\ref{eq:EquivalentVIP})$ and the nonexpansiveness of $P_C(I-\lambda A_{k_n})$ and $P_C$, we have
\begin{align*}
\left\|u-\bar{u}_n\right\|&=\left\|P_C (I-\lambda A_{k_n})u-P_C (I-\lambda A_{k_n})z_n\right\|\notag\\ 
&\le \left\|u-z_n\right\|=\left\|P_Cu-P_Cx_n\right\|\notag\\
&\le \left\|u-x_n\right\|.
\end{align*}
Therefore,
\begin{equation*}\label{eq:2.55t}
\left\|u-\bar{y}_n\right\|\le\left\|u-x_n\right\|. 
\end{equation*}
This implies that $F \subset C_n$ for all $n\ge 0$. By the induction, we obtain that $F \subset C_n\cap Q_n$ for all $n\ge 0$. By arguing similarly to the proof of Theorem $\ref{theo:VIPandRNMs1}$ we obtain the sequences $\left\{x_n\right\}$, $\left\{y_n^i\right\}$, $\left\{u_n\right\}$, $\left\{T_i u_n\right\}$ are bounded and
\begin{equation}\label{eq:2.56}
\begin{cases}
&\lim_{n\to\infty}\left\|x_{n+1}-x_n \right\|=0,\\
&\lim_{n\to\infty}\left\|x_{n+1}-\bar{y}_n \right\|=0,\\
&\lim_{n\to\infty}\left\|x_{n}-y_n^i\right\|=0,\,\forall i=1,2,\ldots,N.
\end{cases}
\end{equation}
By $\bar{u}_n, T_i\bar{u}_n\in C$ and the convexity of $C$, $y_n^i\in C$. Hence $\left\|z_{n}-y_n^i\right\|=\left\|P_Cx_{n}-P_Cy_n^i\right\|\le \left\|x_{n}-y_n^i\right\|\to 0$. So, $\left\|x_{n}-z_n\right\|\le \left\|x_{n}-y_n^i\right\|+\left\|y_n^i-z_n\right\|\to 0.$ 
We have 
\begin{align*}
\left\|z_{n}-y_n^i\right\|&=\left\|\alpha_n(z_n-\bar{u}_n)+(1-\alpha_n)(z_n-T_i\bar{u}_n)\right\|\notag\\
&\ge (1-\alpha_n)\left\|z_n-T_i\bar{u}_n\right\|-\alpha_n\left\|z_{n}-\bar{u}_n\right\|.\label{eq:2.57}
\end{align*}
Therefore,
\begin{equation*}\label{eq:2.58}
\left\|z_n-T_i\bar{u}_n\right\|\le \frac{1}{1-\alpha_n}\left\|z_{n}-y_n^i\right\|+\frac{\alpha_n}{1-\alpha_n}\left\|z_{n}-\bar{u}_n\right\|.
\end{equation*}
This together with the nonexpansiveness of $T_i$ implies that
\begin{align}
\left\|z_n-T_iz_n\right\|&\le\left\|z_{n}-T_i\bar{u}_n\right\|+\left\|T_i\bar{u}_n-T_ix_n\right\|\notag\\ 
&\le \left\|z_{n}-T_i\bar{u}_n\right\|+\left\|\bar{u}_n-x_n\right\|\notag\\
&\le \frac{1}{1-\alpha_n}\left\|z_{n}-y_n^i\right\|+\frac{\alpha_n}{1-\alpha_n}\left\|z_{n}-\bar{u}_n\right\|+\left\|\bar{u}_n-z_n\right\|+\left\|z_{n}-x_n\right\|\notag\\
&\le \frac{1}{1-\alpha_n}\left\|z_{n}-y_n^i\right\|+\frac{1}{1-\alpha_n}\left\|z_{n}-\bar{u}_n\right\|+\left\|z_{n}-x_n\right\|.\label{eq:2.59}
\end{align}
By arguing similarly to $\left(\ref{eq:2.52}\right)$ we obtain 
\begin{equation}\label{eq:2.60}
\lim_{n\to\infty}\left\|z_{n}-\bar{u}_n \right\|=0.
\end{equation}
From $(\ref{eq:2.59}),(\ref{eq:2.60})$ and $\lim_{n\to\infty}\left\|z_{n}-y_n^i \right\|=\lim_{n\to\infty}\left\|z_{n}-x_n \right\|=0$ we get
\begin{equation}\label{eq:2.61}
\lim_{n\to\infty}\left\|z_{n}-T_iz_n \right\|=0.
\end{equation}
Repeating Steps 5, 6, 7 in the proof of  Theorem $\ref{theo:VIPandRNMs1}$ we get $\lim_{n\to\infty}z_{n}=P_{F}x_0.$ By $\lim_{n\to\infty}\left\|z_{n}-x_n \right\|=0$, $\lim_{n\to\infty}x_{n}=P_{F}x_0.$ The proof of Theorem $\ref{theo:VIPandRNMs3}$ is complete.
\end{proof}
Using Theorem $\ref{theo:VIPandRNMs3}$, one gets the following result which was obtained in \cite{AC2013}.
\begin{corollary}\label{lem.5}\cite{AC2013}
Let $\left\{S_i\right\}_{i=1}^N:C\to C$ be a finite family of nonexpansive mappings with $F=\bigcap_{i=1}^NF(S_i)\ne \O$. Let $\left\{x_n\right\}$ be the sequence generated by the following algorithm:
\begin{equation*}\label{eq:5}
\begin{cases}
&x_0\in H,\\ 
&z_n=P_C (x_n),\\
&y_n^i=\alpha_n z_n+(1-\alpha_n)S_i z_n, i=1,\ldots, N,\\
& i_n:=\arg\max\left\{\left\|y_n^i -x_n \right\|:i=1,\ldots,N\right\}, \bar{y}_n:=y_n^{i_n},\\
 &C_n=\left\{v\in H:\left\|v-\bar{y}_n\right\| \le \left\|v-x_n\right\|\right\},\\
&Q_n=\left\{v\in H: \left\langle x_0-x_n,x_n-v\right\rangle \ge 0\right\},\\
&x_{n+1}=P_{C_n\cap Q_n}x_0,n\ge 0,
\end{cases}
\end{equation*}
where the sequence $\left\{\alpha_n\right\}\subset [0,1]$ satisfies $\lim\sup_{n\to \infty}\alpha_n <1$. Then the sequence $\left\{x_n\right\}$ converges strongly to $P_{F}x_0$.
\end{corollary}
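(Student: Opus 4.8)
The plan is to obtain Corollary~\ref{lem.5} as the degenerate case of Theorem~\ref{theo:VIPandRNMs3} in which the variational inequality component is switched off, so that the iteration of Algorithm~\ref{algorithm3} collapses exactly onto the scheme stated here. Concretely, I would take $M=1$ and choose the single inverse strongly monotone operator to be the zero operator $A_1\equiv 0:C\to H$, while keeping the same family $\{S_i\}_{i=1}^N$ and the same parameter sequence $\{\alpha_n\}$.

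First I would check that this choice meets the hypotheses of Theorem~\ref{theo:VIPandRNMs3}. Since $\langle A_1x-A_1y,x-y\rangle=0=\|A_1x-A_1y\|^2$ for all $x,y\in C$, the zero operator is $\alpha$-inverse strongly monotone for every $\alpha>0$, so any $\lambda\in(0,2\alpha)$ is admissible. Moreover, the variational inequality $\langle A_1p^*,p-p^*\rangle\ge 0$ reduces to $0\ge 0$, whence $VI(A_1,C)=C$. Consequently the common solution set of Theorem~\ref{theo:VIPandRNMs3} becomes
\[
F=\Bigl(\cap_{i=1}^N F(S_i)\Bigr)\cap VI(A_1,C)=\cap_{i=1}^N F(S_i),
\]
which is precisely the set $F$ of the corollary and is nonempty by assumption.

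Next I would verify that the iterates coincide. With $z_n=P_Cx_n\in C$ and $A_1\equiv 0$, Step~2 of Algorithm~\ref{algorithm3} gives $u_n^1=P_C(z_n-\lambda A_1 z_n)=P_C z_n=z_n$, because $z_n$ already lies in $C$; hence $\bar u_n=z_n$. Substituting into Step~4 yields $y_n^i=\alpha_n z_n+(1-\alpha_n)S_i z_n$, exactly the update in the corollary, while the selection rule for $i_n$, the sets $C_n$ and $Q_n$, and the projection $x_{n+1}=P_{C_n\cap Q_n}x_0$ are literally identical to those displayed there. Therefore the sequence generated by Algorithm~\ref{algorithm3} under this specialization is the very sequence of the corollary, and Theorem~\ref{theo:VIPandRNMs3} furnishes strong convergence to $P_F x_0$.

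The argument is essentially a reduction, so there is no genuinely hard analytic step; the only points requiring care are the two verifications above --- that the degenerate operator is $\alpha$-inverse strongly monotone (so the theorem's hypotheses are literally satisfied) and that $P_C z_n=z_n$ forces $\bar u_n=z_n$, which is what makes the $A_k$-layer of Algorithm~\ref{algorithm3} transparent. Once these are in place, the convergence is inherited verbatim from Theorem~\ref{theo:VIPandRNMs3}.
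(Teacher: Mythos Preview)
Your proposal is correct and follows exactly the paper's own approach: the paper simply puts $A(x)=0$ for all $x$ and invokes Theorem~\ref{theo:VIPandRNMs3}. Your additional verifications (that the zero operator is trivially $\alpha$-inverse strongly monotone, that $VI(0,C)=C$, and that $\bar u_n=P_C z_n=z_n$) are precisely the details one needs to make the one-line reduction rigorous.
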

\begin{proof}
Putting $A(x)=0$ for all $x\in H$. The proof of Corollary $\ref{lem.5}$ follows immediately from Theorem $\ref{theo:VIPandRNMs3}$.
\end{proof}

\section*{Acknowledgments} The author wishes to thank Prof.Dsc Pham Ky Anh, Department of Mathematics, Hanoi University of Science, VNU for his hints concerning this paper. The author also thanks the reviewers for their valuable comments and suggestions which improved this paper.

\end{document}